\newtheorem{theorem}{Theorem}
\newtheorem{lemma}[theorem]{Lemma}
\newtheorem{corollary}[theorem]{Corollary}
\newtheorem{observation}{Observation}
\newtheorem{proposition}{Proposition}
\newtheorem{conjecture}{Conjecture}
\newtheorem{problem}{Problem}
\author
{
Michael Anastos \thanks{Institute of Mathematics, Freie Universit\"{a}t Berlin, Germany, email: \texttt{manastos@zedat.fu-berlin.de}.}
\and
Ander Lamaison \thanks{Institute of Mathematics, Freie Universit\"{a}t Berlin, Germany, email: \texttt{lamaison@math.fu-berlin.de}. Funded by the Deutsche Forschungsgemeinschaft (DFG, German Research Foundation) under Germany's Excellence Strategy - The Berlin Mathematics Research Center MATH+ (EXC-2046/1, project ID: 390685689).}
\and
Raphael Steiner \thanks{Institute of Mathematics, Technische Universit\"at Berlin, Germany, email: \texttt{steiner@math.tu-berlin.de}.
Funded by DFG-GRK 2434 Facets of Complexity.}
\and
Tibor Szab\'{o} \thanks{Institute of Mathematics, Freie Universit\"{a}t Berlin, Germany, email: \texttt{szabo@math.fu-berlin.de}. 
Research supported in part by GIF grant G-1347-304.6/2016.}
}
\date{\today}
\title{Majority Colorings of Sparse Digraphs}
\begin{document}
\maketitle

\begin{abstract}
A \emph{majority coloring} of a directed graph is a vertex-coloring in which every vertex has the same color as at most half of its out-neighbors. Kreutzer, Oum, Seymour, van der Zypen and Wood \cite{kreutzer} proved that every digraph has a majority $4$-coloring and conjectured that every digraph admits a majority $3$-coloring. 
We verify this conjecture for digraphs with chromatic number at most $6$ or dichromatic number at most $3$. We obtain analogous results for list coloring: We show that every digraph with list chromatic number at most $6$ or list dichromatic number at most $3$ is majority $3$-choosable. We deduce that digraphs with maximum out-degree at most $4$ or maximum degree at most $7$ are majority $3$-choosable.
On the way to these results we investigate digraphs admitting a majority $2$-coloring. We show that every digraph without odd directed cycles is majority $2$-choosable. We answer an open question posed in \cite{kreutzer} negatively, by showing that deciding whether a given digraph is majority $2$-colorable is NP-complete.  
Finally we deal with a fractional relaxation of majority coloring proposed in \cite{kreutzer} and show that every digraph has a fractional majority 3.9602-coloring. We show that every digraph $D$ with minimum out-degree $\Omega\left((1/\varepsilon)^2\ln(1/\varepsilon)\right)$ has a fractional  majority $(2+\varepsilon)$-coloring. 
\end{abstract}

\section{Introduction}
\paragraph*{Preliminiaries.}
Digraphs in this paper are considered loopless, have no parallel edges, but are allowed to have anti-parallel pairs of edges (\emph{digons}). A directed edge with tail $u$ and head $v$ is denoted by $(u,v)$. For a digraph $D$ and a vertex $v \in V(D)$, we let $N^+(v),$ $N^-(v)$ denote the out- and in-neighborhood of $v$ in $D$ and $d^+(v)$, $d^-(v)$ the respective sizes. We denote by $\delta^+(D)$, $\delta^-(D)$, $\Delta^+(D)$, $\Delta^-(D)$ the minimum or maximum  out- or in-degree of $D$, respectively, and let $\Delta(D)=\max\{d^+(v)+d^-(v)|v \in V(D)\}$ denote the maximum degree in $D$. The \emph{underlying graph} of a digraph $D$, denoted by $U(D)$, is the simple undirected graph with vertex set $V(D)$ in which two vertices $x \neq y$ are adjacent iff $(x,y) \in E(D)$ or $(y,x) \in E(D)$. We say that $D$ is $r$-regular for an integer $r \ge 1$ if $d^+(x)=d^-(x)=r$ for every $x \in V(D)$. 

A \emph{majority coloring} of a digraph $D$ with $k$ colors is an assignment $c:V(D) \rightarrow \{1,\ldots,k\}$ such that for every $v \in V(D)$, we have $c(w)=c(v)$ for at most half of all out-neighbors $w \in N^+(v)$. This notion of coloring was first introduced and studied by Kreutzer, Oum, Seymour, van der Zypen, and Wood~\cite{kreutzer}, who showed that every digraph has a majority $4$-coloring. Their elegant argument is based on the observation that every acyclic digraph can be majority $2$-colored. The relevant property of acyclic digraphs is that there is an ordering of its vertices, in which every vertex is preceded by its complete out-neighborhood. Then coloring vertices along this ordering with two colors such that for each vertex we use the color that appears least frequently in the (already colored) out-neighborhood will produce a majority $2$-coloring.

It is easy to construct digraphs which do require $3$ colors for a majority coloring. The canonical examples are the odd directed cycles $\vec{C}_{2k+1}, k \ge 1$, which are not majority $2$-colorable since for digraphs with maximum out-degree one majority-coloring and proper graph coloring of the underlying graph are equivalent. 
However, no example of a digraph is known that requires the use of four colors. 
Kreutzer et al. conjectured that there are none.
\begin{conjecture}[\cite{kreutzer}]\label{mainconj}
Every digraph is majority $3$-colorable.
\end{conjecture}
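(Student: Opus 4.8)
The plan is to attack the conjecture via a minimal-counterexample argument combined with a dichotomy between vertices of small and large out-degree. Suppose $D$ is a counterexample with $|V(D)|$ minimum. The first step is to extract structure: if $D$ contained a vertex $v$ with small $d^+(v)$, one would hope to delete $v$, majority $3$-color $D-v$ by minimality, and then extend to $v$ by choosing the color least used on $N^+(v)$ --- among three colors some color is used by at most $d^+(v)/3 \le d^+(v)/2$ out-neighbors, so $v$ itself is satisfied. The catch is that restoring $v$ may spoil the majority condition at the in-neighbors of $v$, so a naive deletion does not close the induction. This is the recurring difficulty, and the real content would have to be a potential or discharging argument showing that a globally optimal coloring admits no such local violation.

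Concretely, I would let $c$ minimize the number $\Phi(c)$ of monochromatic directed edges and then analyze a vertex $v$ that violates the majority condition. Recoloring $v$ to the least-frequent color on $N^+(v)$ drops its monochromatic out-edges from more than $d^+(v)/2$ to at most $d^+(v)/3$, a gain of at least $d^+(v)/6$; the obstruction is that it may simultaneously create up to $d^-(v)$ new monochromatic in-edges. Thus $\Phi$ is guaranteed to decrease only when $d^+(v) \gg d^-(v)$, and the hard part will be controlling the interaction with large in-degree: for digraphs close to regular the two terms balance and the monovariant argument should go through, but in general this is exactly where it breaks.

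For the remaining high-out-degree regime I would turn to the probabilistic method: under a uniform random $3$-coloring each vertex $v$ expects $d^+(v)/3$ monochromatic out-neighbors, and Chernoff concentration makes the bad event (more than $d^+(v)/2$ of them monochromatic) exponentially unlikely once $d^+(v)$ is large, so one would invoke the Lov\'asz Local Lemma to avoid all bad events at once. The fatal gap here is the dependency structure: the bad event at $v$ depends on the colors on $N^+(v)$, and two bad events interact whenever their out-neighborhoods meet, giving a dependency degree on the order of $\Delta^+(D)\,\Delta^-(D)$, which is unbounded. Hence the local-lemma criterion fails for general digraphs, and this --- together with the $\Phi$-monovariant failing precisely when in-degrees are large --- is the main obstacle.

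In short, the doubling idea behind the known majority $4$-coloring (think of colors $\{1,2\}\times\{1,2\}$) is intrinsically a power-of-two construction and offers no route to $3$ colors, while both potential-minimization and the local lemma stumble on unbounded in-degree. I therefore expect the decisive step to be a genuinely new handle on high-in-degree vertices, and I am skeptical that the approaches above resolve the conjecture as stated. Absent such a handle, the realistic fallback is to verify the conjecture only in regimes where the proper-coloring structure tames exactly this in-degree interaction --- for instance bounded chromatic or dichromatic number, and consequently bounded-degree digraphs --- leaving the full conjecture open.
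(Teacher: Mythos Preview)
Your proposal is not a proof, and you yourself say as much in the final paragraph. That is the right assessment: the statement you were asked to prove is a \emph{conjecture}, and the paper does not prove it either. There is no ``paper's own proof'' to compare against; the paper explicitly leaves Conjecture~\ref{mainconj} open and establishes it only in the special regimes you anticipate at the end (chromatic number at most $6$, dichromatic number at most $3$, $\Delta^+ \le 4$, $\Delta \le 7$).

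Where your discussion and the paper diverge is in the \emph{mechanism} for those partial results. You frame the bounded-degree and bounded-(di)chromatic cases as the places where the $\Phi$-monovariant or the Local Lemma happen to go through. The paper does neither: its engine is a structural list-coloring statement (Theorem~\ref{theorem:2choosability}) saying that if every vertex gets a list of size two and no odd directed cycle is monochromatic in lists, then a majority coloring from the lists exists. From this one immediately gets majority $3$-colorability whenever $V(D)$ can be partitioned into three parts each inducing no odd directed cycle, which is what $\chi \le 6$ (pair up color classes) and $\vec{\chi} \le 3$ supply. The bounded-degree results then fall out via list-Brooks and list-dichromatic bounds, not via any potential argument. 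So your diagnosis of the obstacles (unbounded in-degree wrecking both the recoloring monovariant and the Local Lemma dependency degree) is accurate, but the paper's route to the partial results is a genuinely different, non-probabilistic idea that you do not mention.
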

Kreutzer et al.~\cite{kreutzer} also provide ample evidence for their conjecture by establishing that it holds 
for ``most'' digraphs. 
They show, using the Lov\'{a}sz Local Lemma, that the uniform random $3$-coloring is a majority $3$-coloring with non-zero probability if certain local density conditions hold, namely if
\begin{itemize}
\item $\delta^+(D) >72 \ln (3|V(D)|)$, or
\item $\delta^+(D) \ge 1200$ and $\Delta^-(D) \leq \frac{\exp(\delta^+(D)/72)}{12\delta^+(D)}$,
\end{itemize} 
In \cite{kreutzer} it is also mentioned at the end that a more careful analysis of the Local Lemma approach works for $r$-regular digraphs provided $r \ge 144$. 
Subsequently Gir\~{a}o, Kittipassorn, and Popielarz~\cite{girao} studied tournaments in particular, and showed, also using the probabilistic method, that if the minimum out-degree is at least $55$, then the tournament is 
majority $3$-colorable. 

These are all the results  we are aware of about Conjecture~\ref{mainconj}. All the proofs use some probabilistic idea and require some lower bound on the minimum out-degree. However,
digraphs with small minimum out-degree seem to be outside the realm of any such probabilistic methods and it looks like they constitute a main difficulty of the problem. 
This is also illustrated by the fact that it was not even known whether planar digraphs are majority $3$-colorable. 

In this paper our main motivation is to complement the existing results for locally dense digraphs and provide approaches to this intriguing conjecture on the opposite end of the spectrum.


\subsection{Our results}

\subsubsection{Majority $3$-Colorability}
Since a proper coloring is also a majority coloring, Conjecture~\ref{mainconj} is immediately true for digraphs with chromatic number at most three. For four-chromatic digraphs this is already not obvious. 
Our first result resolves the conjecture for digraphs with low chromatic number, including planar digraphs.
\begin{theorem} \label{chrom6}
Let $D$ be a digraph such that $\chi(D) \leq 6$. Then $D$ is majority $3$-colorable.
\end{theorem}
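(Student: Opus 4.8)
The plan is to use the proper $6$-colouring supplied by the hypothesis as a scaffold. Fix a colouring $V(D)=V_1\cup\dots\cup V_6$ of the underlying graph into six independent sets, pair the six classes into three pairs, and take the three pairs as the colour classes of a candidate majority $3$-colouring $c$. Because each $V_i$ is independent, the only monochromatic out-neighbours a vertex $v\in V_i$ can have under $c$ lie in the class paired with $V_i$; so if $v$ is \emph{bad}, i.e.\ it has strictly more than $d^+(v)/2$ monochromatic out-neighbours, then more than half of $N^+(v)$ carries the single colour $c(v)$. Consequently each of the other two colours is carried by fewer than $d^+(v)/2$ out-neighbours of $v$, and $v$ can be \emph{repaired} by recolouring it with \emph{either} of the two remaining colours; after such a recolouring $v$ itself is good.

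The substance of the proof is to turn these local repairs into a terminating process. Iteratively one picks a bad vertex $v$ and recolours it; the only vertices that can become bad as a result are in-neighbours of $v$ carrying $v$'s new colour, and the repair offers two colour choices with which to try to avoid creating them. To make this halt I would combine two ingredients. First, choose the initial pairing well: for each vertex record the (at most one) class containing more than half of its out-neighbourhood --- its \emph{heavy class} --- and note that a bad vertex of $c$ is exactly one whose class is paired with its heavy class; summing over the fifteen perfect matchings of $K_6$, each vertex with a heavy class is bad for exactly three of them, so some pairing makes the number of bad vertices at most $|V(D)|/5$. Second, run the repairs under a potential function that simultaneously controls the number of bad vertices, a weighted count of vertices that are tight or nearly so, and the number of monochromatic edges, always choosing the repair colour that does not increase it.

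The main obstacle is precisely this last step: one repair fixes one vertex but may tip an in-neighbour over the threshold, so the crude ``number of bad vertices'' potential need not decrease, and the crude ``number of monochromatic edges'' potential can genuinely increase when in-degrees greatly exceed out-degrees --- which is exactly why a proper colouring is already not enough for $4$-chromatic digraphs. This is where the bound $\chi(D)\le 6$ does real work: regrouping six classes into \emph{three} pairs guarantees that every bad vertex has its excess out-neighbourhood confined to one class and hence may be repaired into \emph{either} of the two other colours, which is the slack needed to design a working potential. (Alternatively, one may regroup the six classes into three parts $A_1,A_2,A_3$ whose induced subdigraphs have bipartite underlying graph, assign the colour pairs $\{1,2\},\{1,3\},\{2,3\}$ to the three parts so that each colour is shared by exactly two parts, and repeatedly recompute a majority $2$-colouring of each part in a round-robin --- each part's subdigraph being bipartite and hence majority $2$-colourable from any prescribed pair of colours --- arguing that the round-robin converges.) The same scheme, with ``bipartite underlying graph'' replaced by ``acyclic'', is the natural route to the companion statement for dichromatic number at most $3$.
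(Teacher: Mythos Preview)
Your starting point---pairing the six colour classes into three pairs so that each part induces a bipartite subdigraph---is exactly right, and it is how the paper begins as well. The gap is in what follows.

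You correctly name the obstacle: recolouring a bad vertex can tip in-neighbours over the threshold, and neither ``number of bad vertices'' nor ``number of monochromatic arcs'' is monotone. You then assert that the two-colour slack at each repair ``is the slack needed to design a working potential'', but you never define such a potential, let alone prove it decreases. That is the entire content of the theorem, and as written it is a wish rather than a proof. It is not at all clear that a local recolouring scheme can work: a bad vertex $v$ may have arbitrarily many in-neighbours of colour $2$ and arbitrarily many of colour $3$, each sitting exactly at the threshold, so \emph{either} recolouring of $v$ creates arbitrarily many new bad vertices; the hypothesis $\chi(D)\le 6$ does nothing to prevent this. Your parenthetical round-robin alternative has the same gap: recomputing one part's majority $2$-colouring alters the out-neighbourhood colours seen by the other two parts, and you offer no reason the process stabilises.

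The paper sidesteps iteration entirely. Its key device is an extension lemma (Proposition~\ref{proposition:nooddcycles}): a digraph with no odd directed cycles admits a majority $2$-colouring \emph{extending any prescribed $2$-colouring of its sinks}. This is exactly what decouples the three parts. Having assigned the list $\{2,3\}$ to $X_1=Y_1\cup Y_2$, the list $\{1,3\}$ to $X_2=Y_3\cup Y_4$, and $\{1,2\}$ to $X_3=Y_5\cup Y_6$, one treats each colour pair $\{a,b\}$ separately: take the vertices with that list, attach their out-neighbours in the other two parts as sinks pre-coloured with the unique element of $\{a,b\}$ lying in their own list, and invoke the extension lemma once. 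The three partial colourings then agree on the boundaries by construction, so they glue to a global majority $3$-colouring with no iteration and no potential function (this is Theorem~\ref{theorem:2choosability}, applied through Theorem~\ref{theorem:3oddpartition}). The extension lemma itself is proved by a greedy procedure, but one run inside a single bipartite strong component---where bipartiteness supplies a genuine proper $2$-colouring to fall back on for the residual set---rather than across the whole digraph.
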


Another coloring concept for digraphs which greatly grew in importance in the last two decades is the \emph{dichromatic number}. This parameter of a digraph $D$, denoted by $\vec{\chi}(D)$, was introduced already in 1982 by Victor Neumann-Lara~\cite{neulara}.\footnote{Some authors simply refer to $\vec{\chi}(D)$ as the chromatic number of the
digraph $D$.} 
It is defined as the smallest $k$ allowing a partition $X_1,\ldots,X_k$ of $V(D)$ such that $D[X_i]$ is acyclic for all $i=1,\ldots,k$. 

In the introduction above we mentioned how to give a majority $2$-coloring of acyclic digraphs, i.e. digraphs 
with dichromatic number one. 
In our second main result we prove Conjecture~\ref{mainconj} for digraphs with dichromatic number at most three.

\begin{theorem}\label{dichrom3}
Let $D$ be a digraph such that $\vec{\chi}(D)\leq 3$. Then $D$ is majority $3$-colorable. 
\end{theorem}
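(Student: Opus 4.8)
I fix a partition $V(D)=X_1\cup X_2\cup X_3$ with each $D[X_i]$ acyclic (which exists since $\vec{\chi}(D)\le 3$), and for each $i$ an ordering $\prec_i$ of $X_i$ in which every vertex is preceded by all of its out-neighbours inside $X_i$. I then colour the parts one after another, in any fixed order, each part internally along $\prec_i$. The device that makes the edges running between parts manageable is to restrict $X_i$ to the two-element palette $P_i$, chosen cyclically: $P_1=\{1,2\}$, $P_2=\{2,3\}$, $P_3=\{3,1\}$. Any two of these palettes meet in exactly one colour, so once $v\in X_i$ gets a colour $\gamma\in P_i$, a monochromatic out-edge at $v$ can only go inside $X_i$ or into the unique other part whose palette contains $\gamma$; call that part the \emph{partner} of $\gamma$. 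The two colours of $P_i$ have the two parts other than $X_i$ as their (distinct) partners, while the third of the three parts uses a palette not containing $\gamma$ and hence contributes no monochromatic out-edge at $v$ at all.

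When we reach a vertex $v\in X_i$, every out-neighbour of $v$ inside $X_i$ is already coloured, and each of the two parts other than $X_i$ is either entirely coloured (if it precedes $X_i$) or entirely uncoloured. For each $\gamma\in P_i$ define a pessimistic cost $c_\gamma(v)$ as the number of out-neighbours of $v$ inside $X_i$ already coloured $\gamma$, plus — letting $Y$ be the partner of $\gamma$ — the number of out-neighbours of $v$ in $Y$ already coloured $\gamma$ if $Y$ precedes $X_i$, and the full out-degree $d^+(v)$ into $Y$ otherwise. Colour $v$ with the colour $\gamma^\ast\in P_i$ minimising $c_\gamma(v)$. The eventual number of monochromatic out-edges at $v$ is then at most $c_{\gamma^\ast}(v)$, because the third part never contributes and the partner of $\gamma^\ast$ contributes at most the (exact or upper) bound used. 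The key estimate is now immediate: writing $P_i=\{x,y\}$ and letting $p,q,r$ be the out-degrees of $v$ into the partner of $x$, into $X_i$, and into the partner of $y$ respectively, the internal parts of $c_x(v)$ and $c_y(v)$ sum to $q$ and the partner parts are bounded by $p$ and $r$, so $c_x(v)+c_y(v)\le p+q+r=d^+(v)$; hence $c_{\gamma^\ast}(v)=\min_{\gamma}c_\gamma(v)\le\lfloor d^+(v)/2\rfloor$, i.e.\ $v$ is happy, and it stays happy no matter how the remaining parts are coloured.

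I do not expect a serious obstacle once this set-up is in place; the only point that needs care is precisely the one I would expect a first attempt to mishandle, namely the part coloured first, for which neither partner is yet coloured. A naive ``least-frequent colour inside $X_i$'' greedy fails there: a vertex whose two colour choices have very unequal out-degrees into their partner parts must be pushed toward the colour with the smaller partner. This is exactly what the cost $c_\gamma$ does automatically — an uncoloured partner enters $c_\gamma$ with the whole out-degree into it, so the minimisation prefers small partners — and with this built in the ``$\min\le$ average'' argument goes through uniformly for all three parts and every position in the colouring order, using nothing about $D$ beyond its being the union of the three acyclic pieces. Finally, since the procedure only ever inspects already-decided neighbours and always has two candidate colours at each vertex, the same scheme should give the list version claimed in the introduction, with the cyclic palettes replaced by a suitable choice of two colours from each list; carrying out that replacement is the one place where the list statement is a little more delicate than the plain one.
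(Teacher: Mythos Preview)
Your argument is correct and in fact rather clean. The cyclic palette assignment $P_1=\{1,2\}$, $P_2=\{2,3\}$, $P_3=\{3,1\}$ is exactly the device the paper uses as well, but from that point on the two proofs diverge. The paper does \emph{not} colour greedily along a topological ordering; instead it invokes a list-colouring lemma (Theorem~\ref{theorem:2choosability}) which in turn rests on a nontrivial inductive proposition about digraphs without odd directed cycles. The payoff of that extra machinery is a strictly stronger intermediate result (Theorem~\ref{theorem:3oddpartition}): it suffices that each $D[X_i]$ have no \emph{odd} directed cycle, not that it be acyclic. Your greedy argument genuinely needs the topological ordering of each $X_i$, so it proves Theorem~\ref{dichrom3} directly but does not recover this more general statement, nor Theorem~\ref{chrom6} (where the parts are bipartite rather than acyclic).

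Conversely, what your approach buys is simplicity: for the specific hypothesis $\vec{\chi}(D)\le 3$ you avoid the whole apparatus of Proposition~\ref{proposition:nooddcycles} and Theorem~\ref{theorem:2choosability} and get a two-paragraph proof using only ``$\min\le$ average''. Your handling of the first part---charging the full out-degree into a not-yet-coloured partner---is exactly the right fix, and the inequality $c_x(v)+c_y(v)\le d^+(v)$ goes through uniformly as you say. Your closing remark about the list version is honest: the paper's route through OD-$3$-choosability is where lists are absorbed naturally, and extending your greedy scheme to arbitrary $3$-lists would indeed need a separate argument to pick the two-element sublists so that the ``unique partner'' structure survives.
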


The results of \cite{kreutzer} and \cite{girao} cited in the introduction indicate that the case of $r$-regular digraphs for constant $r$ is probably the most important benchmark in the study of Conjecture~\ref{mainconj}. Recall in particular that the Local Lemma approach works for $r$-regular digraphs provided $r \ge 144$. 
Note however the crucial non-monotonicity in the problem: even though we do not know whether Conjecture~\ref{mainconj} is true for $r=143$, it does hold (quite easily) for $r=1$ and $2$. 
Indeed, a $1$-regular digraph is the disjoint union of directed cycles, and hence we can $3$-color it properly to obtain a majority-coloring. Then Conjecture~\ref{mainconj} also follows for $2$-regular digraphs. Even more generally, the validity of the conjecture for any odd regularity $r-1$ implies it for the next even regularity $r$.
This is the consequence of the fact that for even $r$ any $r$-regular digraph $D$ contains a $1$-regular spanning subgraph $F$ and any $3$-majority coloring of the $(r-1)$-regular digraph $D-F$ is also a majority coloring of $D$.
Most generally, if a digraph $D' $ is obtained from a digraph $D$ by adding an edge $(u,v)$ whose tail has odd out-degree $d^+_D(u)$ then a majority coloring of $D$ is also a majority coloring of $D'$. 

From our next main result it will follow that majority $3$-colorings also exist in the cases when $r=3$ or $4$.
Since our proof of Conjecture~\ref{mainconj} for $3$-regular digraphs relies crucially on a natural list coloring extension of majority coloring and also implies a stronger statement, we first introduce this stronger concept in the next subsection and then present our results there. 

\subsubsection{Majority $3$-Choosability}
The notion of majority choosability of digraphs was already proposed in \cite{kreutzer}. We call a digraph \emph{$k$-majority-choosable}, if for any assignment of lists of size at least $k$ to the vertices, we can choose colors from the respective lists such that the arising coloring is a majority coloring. 

Anholcer, Bosek, and Grytczuk~\cite{lists} gave a beautiful proof to show that every digraph is majority $4$-choosable (not only majority $4$-colorable).

It was already noted in~\cite{kreutzer} that all their results about dense digraphs using probabilistic methods, including in particular the one about $r$-regular digraphs for $r\geq 144$, remain valid for majority $3$-choosability instead of majority $3$-colorability.
Here we obtain results at the other end of the spectrum, involving digraphs with bounded maximum (out-)degrees. 
This implies Conjecture~\ref{mainconj} for these cases, and in particular also for $3$-regular digraphs.

\begin{theorem}\label{maxdegree}
If $\Delta^+(D) \leq 4$ or $\Delta(U(D)) \leq 6$ or $\Delta(D) \leq 7$, then $D$ is majority $3$-choosable.
\end{theorem}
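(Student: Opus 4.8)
The plan is to deduce all three conditions from a single structural lemma, proved by induction, and then observe that each of the three degree bounds supplies the hypotheses of that lemma either directly or after a small reduction. Concretely, I would first dispose of the reductions among the three statements. Using the non-monotonicity remark from the introduction: if $\Delta^+(D)\le 4$, then every vertex with odd out-degree $1$ or $3$ is harmless — we may repeatedly add out-edges at such vertices (or pass to a subgraph) to normalize the out-degrees, but the cleanest route is to handle $\Delta^+(D)\le 4$ directly. The condition $\Delta(U(D))\le 6$ is a strengthening neither of $\Delta^+\le 4$ nor of $\Delta(D)\le 7$ in an obvious way, so I would treat it as the core case: $\Delta(U(D))\le 6$ means each vertex has at most $6$ neighbors in the underlying graph, hence $d^+(v)+d^-(v)\le 12$ but more usefully $d^+(v)\le 6$ and, crucially, each out-neighbor is one of at most $6$ underlying neighbors. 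Finally $\Delta(D)\le 7$ implies $\Delta(U(D))\le 7$, so that case needs a slightly more careful count; I would check whether the same argument tolerates $7$ underlying neighbors, and if not, use the digon structure ($\Delta(D)\le 7$ forces many vertices to have small out-degree or else few digons) to recover it.

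The heart of the matter is a greedy/inductive colouring along a carefully chosen vertex order. Given lists of size $3$, I would try to order $V(D)$ so that when each vertex $v$ is coloured, the number of already-coloured out-neighbours plus the number of later out-neighbours can be balanced: choosing for $v$ a colour $c(v)$ from its list that appears on at most $\lfloor |N^+(v)\cap\text{earlier}|/2\rfloor$ earlier out-neighbours is always possible because with a list of size $3$ some colour appears on at most a third of any fixed neighbour set. The subtlety is the out-neighbours coloured \emph{after} $v$: a later vertex $w\in N^+(v)$ might be forced onto $c(v)$. This is exactly why majority $3$-choosability is genuinely harder than the acyclic $2$-colouring sketch, and why a bound on $\Delta^+$ (or on the underlying degree) is what makes it go through: if $d^+(v)\le 4$, then $v$ can afford at most $2$ out-neighbours of its own colour, and one can set up a discharging/reservation scheme where each vertex "reserves" a forbidden colour that its out-neighbours must avoid, the feasibility of the reservation system following from the degree cap by a Hall-type or Local-Lemma-free counting argument. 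I would likely phrase this as: delete a vertex of minimum degree in $U(D)$, colour $D-v$ by induction, and then recolour $v$ together with a bounded set of nearby vertices; the degree bound ensures this local patch has bounded size and the list sizes give enough slack.

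The main obstacle I expect is precisely controlling the "out-neighbours coloured later" interaction when we do not have an acyclic order — i.e., making the induction close without the list sizes growing. With only $3$ colours and up to $4$ out-neighbours, a vertex tolerates at most $2$ same-coloured out-neighbours, which is a tight budget: a single bad later choice already uses half of it, so there is essentially no room for error. I anticipate the proof must exploit that a vertex $v$ with $d^+(v)\le 4$ has at most $4$ out-neighbours total, each of which sees $v$ as an in-neighbour, and then argue that one can process strongly connected components or ear-decompositions of the relevant auxiliary structure so that at most one out-edge of each vertex is "backward." Where this does not immediately work, the fallback is an explicit case analysis on the few configurations of a vertex with $3$ or $4$ out-neighbours and its digon pattern — finite, but the bookkeeping for $\Delta(D)\le 7$ (where $d^+$ can be as large as $7$) is where I would expect the real work, possibly requiring us to first reduce odd out-degrees down via the monotonicity trick so that only $d^+\in\{4,6\}$ remain, and then pair up out-edges.
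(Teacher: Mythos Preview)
Your proposal is not a proof but a plan that does not close; the central gap is exactly the one you flag yourself. A greedy pass along any vertex order gives you control only over the out-neighbours already coloured; with $d^+(v)\le 4$ and lists of size $3$ there is no mechanism in what you wrote that prevents all later out-neighbours of $v$ from landing on $c(v)$. The phrases ``reservation scheme'', ``Hall-type counting'', ``discharging'', and ``recolour a bounded local patch'' are placeholders, not arguments, and none of them is made precise enough to check. (Also, the monotonicity trick goes the other way: one removes an edge from a vertex of \emph{even} out-degree, not odd, so ``reduce odd out-degrees down'' is backwards.) For $\Delta(D)\le 7$ a vertex can have $d^+=7$, and nothing you outline comes close to handling that.

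The paper's route is entirely different and avoids the later-out-neighbour problem altogether. The engine is Theorem~\ref{theorem:2choosability}: from size-$2$ lists one can majority-colour provided no odd directed cycle sits inside a single list class. This yields the notion of \emph{OD-$3$-choosability} (lists of size $3$ can be coloured with no monochromatic odd directed cycle), and Theorem~\ref{theorem:odd3listpartition} shows OD-$3$-choosable $\Rightarrow$ majority $3$-choosable. The degree cases are then reduced to OD-$3$-choosability: Lemma~\ref{outdegree3} proves by a minimal-counterexample argument that $\min\{d^+(x),d^-(x)+1\}\le 3$ for all $x$ implies OD-$3$-choosable (peel off a vertex with $d^-\le 2$ and colour it off its in-neighbours; the residual $3$-regular case is handled via list Brooks on $U(D)$ and, for the exceptional $K_7$, via known bounds on the list dichromatic number of $7$-vertex tournaments). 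Corollary~\ref{aux} then covers $\Delta^+\le 4$ and $\Delta\le 7$ by stripping an out-edge from any vertex of out-degree exactly $4$; $\Delta(U(D))\le 6$ goes through Theorem~\ref{lchrom6} plus a short direct argument for $U(D)=K_7$. The point is that the paper never tries to control later out-neighbours directly; it converts the problem into ``avoid monochromatic odd directed cycles'', where a genuine induction (delete a low-in-degree vertex, forbid its in-neighbours' colours) works cleanly.
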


Next we derive choosability analogues of our first two theorems. 
The analogue of Theorem~\ref{chrom6} connects the choosability of the underlying graph to majority choosability. 
\begin{theorem} \label{lchrom6}
Let $D$ be a digraph whose underlying undirected graph is $6$-choosable. Then $D$ is majority $3$-choosable. In particular any digraph with a $5$-degenerate underlying graph is majority $3$-choosable.
\end{theorem}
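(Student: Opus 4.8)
The plan is to reduce majority $3$-choosability of $D$ to $6$-choosability of the underlying graph $U(D)$ by a \emph{two-round} argument. In the first round we use a $6$-coloring (from arbitrary lists of size $6$) of $U(D)$ to split the vertex set into a few classes in which \emph{every out-neighbourhood is small in each class}; in the second round we recolour using $3$ colours so that the majority condition is restored. Concretely, suppose each $v\in V(D)$ is given a list $L(v)$ with $|L(v)|\ge 3$. Form the list $L'(v)$ of all \emph{unordered pairs} $\{a,b\}\subseteq L(v)$ with $a\ne b$; then $|L'(v)|\ge \binom{3}{2}=3$, so these are lists of size at least $3$ for $U(D)$ — but we actually want lists of size $6$, so instead we take $L'(v)$ to be the set of all ordered pairs (equivalently, the set $\binom{L(v)}{2}$ refined appropriately) giving $|L'(v)|\ge 6$. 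The point is that a proper $L'$-colouring of $U(D)$ assigns to each vertex a small ``palette'' $p(v)$ of colours from its original list, with the key feature that \emph{adjacent vertices get different palettes}. We will arrange that along any out-edge $(u,w)$ the palettes $p(u)$ and $p(w)$ differ, which will let us break ties.

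The key step is the second round. Having fixed, for every $v$, a pair $p(v)=\{a_v,b_v\}\subseteq L(v)$ from a proper colouring of $U(D)$ with the $6$-element lists $\binom{L(v)}{2}$ (so that $(u,w)\in E(D)$ implies $p(u)\ne p(w)$), we now want to choose $c(v)\in p(v)$ for each $v$ so that $c$ is a majority colouring. Since each vertex has only two candidate colours, this is precisely a \emph{majority $2$-colouring problem with prescribed $2$-element lists}. The crucial point is that in the auxiliary digraph we may \emph{delete} every out-edge $(u,w)$ with $p(u)\ne p(w)$: no matter how we later choose $c(w)\in p(w)$, if $c(u)=c(w)$ then $c(u)\in p(w)$, but we still need to count it. Wait — more carefully: an out-neighbour $w$ of $u$ can only contribute to $u$'s ``same-colour'' count if $c(u)\in p(w)$, hence only if $c(u)\in p(u)\cap p(w)$. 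So the edges that matter for $u$ are exactly those $(u,w)$ with $p(u)\cap p(w)\ne\emptyset$; and among the four possibilities for an ordered pair of distinct $2$-sets, overlap in at most one element. Thus, after deletion, each surviving out-edge at $u$ lies in a ``colour class graph'' where all endpoints share a common colour. The residual structure decomposes, by the common shared colour, into subdigraphs in which every vertex has all its (relevant) out-neighbours sharing one fixed colour with it — and in such a digraph the majority condition for $u$ says: at most half of these out-neighbours are coloured with that shared colour, i.e. we have a genuine two-colouring freedom. One then observes that this residual problem is a majority $2$-list-colouring instance with no ``odd obstruction'', because consecutive vertices on a directed cycle with nonempty palette-overlap cannot all be forced — and we invoke (the proof technique behind) the statement, mentioned earlier in the excerpt, that digraphs without odd directed cycles are majority $2$-choosable, applied componentwise to the residual digraph.

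The main obstacle, and where the argument needs real care, is ensuring the residual two-colouring instance is actually solvable: a proper $6$-list-colouring of $U(D)$ guarantees $p(u)\ne p(w)$ on edges, but two distinct $2$-sets can still share one colour, so the residual digraph need not be acyclic, and we must certify it contains no odd directed cycle (or otherwise admits a majority $2$-choosing). The fix is to be cleverer in round one: instead of arbitrary $2$-subsets, index the $6$ ``colours'' for $U(D)$ by the three colours of a fixed palette together with a ``sign'', so that the round-one colouring simultaneously picks, for each $v$, both a pair $p(v)$ and a tentative choice $c_0(v)\in p(v)$, with adjacent vertices receiving either different pairs or different tentative colours. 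Then along each directed cycle the tentative colouring already fails the majority condition at only ``few'' vertices, and flipping $c_0(v)$ to the other element of $p(v)$ at a well-chosen set of vertices — exactly as in the greedy argument for acyclic digraphs, now run on the strongly connected components after contracting the ``already-good'' edges — repairs everything. Verifying that this flipping terminates and respects all lists is the technical heart; the degeneracy statement in the theorem follows since a $5$-degenerate graph is $6$-choosable by the standard greedy argument. I expect the bookkeeping in round two — tracking which out-edges are ``dangerous'' for which vertex under which of the two possible colours — to be the step most likely to hide a subtlety.
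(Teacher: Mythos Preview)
Your high-level strategy is the paper's: use $6$-choosability of $U(D)$ to assign each vertex a $2$-element sublist $p(v)\subseteq L(v)$, then solve the resulting majority $2$-list problem via Theorem~\ref{theorem:2choosability}. But your execution has a genuine gap. You assert several times that a proper colouring from the $6$-element lists forces $p(u)\ne p(w)$ on every edge. This is false: with ordered pairs as your six colours, $(a,b)$ and $(b,a)$ are distinct colours yielding the same palette $\{a,b\}$; with your later ``colour plus sign'' encoding the same thing happens. Adjacent vertices can and will share a palette. Your subsequent ``flipping'' argument is then trying to repair a situation you have not correctly described, and it never establishes the hypothesis that Theorem~\ref{theorem:2choosability} actually requires: that no \emph{odd directed cycle} has all its vertices assigned the same $2$-list.

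The missing observation is short. With either of your encodings, the vertices receiving a fixed palette $\{a,b\}$ split into two \emph{independent} sets (those coloured $(a,b)$ versus $(b,a)$, or sign $+$ versus $-$), because the $6$-colouring of $U(D)$ is proper. Hence the subgraph induced by any single palette is bipartite, and in particular contains no odd directed cycle. That is precisely the hypothesis of Theorem~\ref{theorem:2choosability}, which now applies directly to the $2$-lists $p(v)$ and produces the desired majority colouring with $c(v)\in p(v)\subseteq L(v)$. No flipping, no component-by-component greedy repair is needed. This is exactly the paper's argument: in its language, the doubling trick shows $D$ is OD-$3$-choosable, and then Theorem~\ref{theorem:odd3listpartition} (whose proof is the passage to $2$-sublists plus Theorem~\ref{theorem:2choosability}) finishes. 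The $5$-degenerate claim follows since $5$-degenerate graphs are $6$-choosable by the greedy argument.
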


The list dichromatic number $\vec{\chi}_\ell(D)$ of a digraph $D$ was introduced by Bensmail, Harutyunyan, and Le~\cite{listdichrom}. It is defined as the minimum integer $k \ge 1$ such that for any assignment of lists of size at least $k$ to the vertices, we can choose colors without producing monochromatic directed cycles. We have the following analogue of Theorem~\ref{dichrom3} involving this parameter.
\begin{theorem} \label{ldichrom3}
Let $D$ be a digraph with $\vec{\chi}_\ell(D) \leq 3$. Then $D$ is majority $3$-choosable. 
\end{theorem}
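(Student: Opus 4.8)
The plan is to produce a single coloring that is simultaneously a list-dicoloring of $D$ with respect to the given lists and a majority coloring, found by an extremal argument. Fix a list assignment $L$ with $|L(v)|\ge 3$ for all $v$. Since $\vec{\chi}_\ell(D)\le 3$, the family $\mathcal{C}$ of colorings $c$ with $c(v)\in L(v)$ for all $v$ and with no monochromatic directed cycle is nonempty. For $c\in\mathcal{C}$ let $D_c$ denote the spanning subdigraph of $D$ consisting of the monochromatic arcs, i.e.\ the arcs $(u,w)$ with $c(u)=c(w)$; by definition of $\mathcal{C}$ this $D_c$ is acyclic. The key observation is that $c$ is a majority coloring exactly when $d^+_{D_c}(v)\le d^+(v)/2$ for every $v$. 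Among all $c\in\mathcal{C}$ I would pick one minimizing the number $\Phi(c)=|E(D_c)|$ of monochromatic arcs (or, if this turns out not to suffice, a more refined potential, see below), and argue that such a $c$ must be a majority coloring.

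Suppose not, and let $v$ be a vertex with $d^+_{D_c}(v)>d^+(v)/2$. The two colors of $L(v)$ other than $c(v)$ together account for fewer than $d^+(v)/2$ out-neighbors of $v$, so one of them, call it $c'$, is the color of fewer than $d^+(v)/4$ out-neighbors of $v$. Recoloring $v$ with $c'$ deletes more than $d^+(v)/2$ monochromatic out-arcs at $v$ and creates fewer than $d^+(v)/4$ new ones, a net gain of more than $d^+(v)/4$ toward lowering $\Phi$. Two points must be controlled: (i) the recoloring must not introduce a monochromatic directed cycle, and (ii) the monochromatic \emph{in}-arcs at $v$ must not grow by as much as we gained. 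For (i), a new monochromatic cycle would consist of $v$ (now colored $c'$) together with a directed path inside the color class of $c'$ from an out-neighbor of $v$ back to an in-neighbor of $v$; here I would either show that one of the two candidate colors avoids this situation, or else first remove from the $c'$-class a suitable set of arcs to restore acyclicity and absorb the change by a local re-ordering of that class's topological order.

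The real obstacle is (ii): up to $d^-(v)$ in-neighbors $u$ of $v$ may satisfy $c(u)=c'$, and each contributes a new monochromatic arc $(u,v)$, potentially dwarfing the $d^+(v)/4$ gain --- this is precisely the phenomenon responsible for general digraphs failing to be majority $2$-colorable. To beat it I would use a \emph{cascading recoloring}: after recoloring $v$ with $c'$, any in-neighbor $u$ with $c(u)=c'$ that has itself become bad is recolored in turn with its least-popular out-neighbor color, and so on along in-arcs. Exploiting that the monochromatic subdigraph remains acyclic after each step, one shows the cascade moves strictly ``upstream'' in a suitable topological order, hence terminates, and that its total effect on $\Phi$ is a strict decrease, contradicting the minimality of $c$. (An alternative to the cascade is to run the extremal argument with a smarter potential --- for instance the lexicographically least sorted vector of overload ratios $d^+_{D_c}(v)/d^+(v)$ --- designed so that a single recoloring of the most overloaded vertex already improves it. Either way, the crucial feature is that every intermediate coloring can be kept a list-dicoloring; this is exactly where we use $\vec{\chi}_\ell(D)\le 3$ rather than merely $\vec{\chi}(D)\le 3$.)

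A more hands-on alternative is worth keeping in mind: fix an acyclic partition $V=X_1\cup X_2\cup X_3$ coming from the dicoloring and color greedily along a combined ordering --- inside each $X_i$ follow a topological order of $D[X_i]$, so that a vertex's out-neighbors within its own part are already colored, and give each vertex the color from its list least frequent among its already-colored out-neighbors. Vertices of the part processed last are automatically happy; the work is to show, using the slack from three colors and lists of size three, that the cross-class out-arcs --- especially those leaving low-out-degree vertices, which essentially force a proper coloring --- can be absorbed. Reconciling this ``greedy'' regime for high out-degree with the ``proper-coloring'' regime for low out-degree is where I expect the main difficulty to lie.
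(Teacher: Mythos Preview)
Your proposal is not a proof; both points you flag as ``must be controlled'' are genuine gaps, and neither is closed. For (i), there is no reason the two spare colors in $L(v)$ cannot \emph{both} create a monochromatic directed cycle through $v$: all that is needed is, for each spare color $c'$, a $c'$-monochromatic directed path from some out-neighbor of $v$ to some in-neighbor of $v$, and nothing in the hypothesis $\vec{\chi}_\ell(D)\le 3$ rules this out. Your fallback of ``removing from the $c'$-class a suitable set of arcs'' is not an operation available to you---you can only recolor vertices, and recoloring an internal vertex of that path may create new monochromatic cycles elsewhere. For (ii), the cascade relies on the invariant that $D_c$ stays acyclic after every step, but that is precisely (i) again at each vertex you touch; and ``moves strictly upstream in a suitable topological order'' has no meaning once the monochromatic subdigraph, and hence any topological order of it, is changing at every step. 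The alternative potential you sketch (lexicographic overload vector) suffers from the same problem: a single recoloring can raise the overload at many in-neighbors simultaneously, and you have not shown the vector decreases. In short, you are attempting to prove the stronger statement that some list-dicoloring is already a majority coloring, and the local-improvement machinery you propose does not establish it.

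The paper sidesteps all of this by \emph{not} looking for a coloring that is simultaneously an acyclic dicoloring and a majority coloring. Instead it uses $\vec{\chi}_\ell(D)\le 3$ once, on the auxiliary $3$-element lists $L^\ast(v)=\{\{a,b\}:a\neq b\in L(v)\}$, to pick for each vertex a $2$-element sublist $c^\ast(v)\subseteq L(v)$ so that no directed cycle (in particular no odd one) has all vertices assigned the same sublist. It then forgets acyclicity entirely and invokes a separate structural result (Theorem~\ref{theorem:2choosability}): whenever size-two lists are assigned so that no odd directed cycle carries a constant list, a majority coloring from those lists exists. That theorem is proved by a constructive greedy/extension procedure on strong components of bipartite pieces, not by an extremal argument, and the resulting majority coloring need not be an acyclic dicoloring at all. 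The missing idea in your approach is this two-stage decoupling: use the list-dichromatic hypothesis only to manufacture good size-two sublists, and then solve the majority problem from those sublists directly.
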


\subsubsection{Fractional Majority Colorings}
The concept of fractional majority coloring emerges as an LP-relaxation of the problem of majority coloring, much in the same way as the usual fractional colorings of graphs. This notion was already introduced in \cite{kreutzer}. 
The definition is somewhat technical and we postpone it to Section~\ref{sec:fractional}. To appreciate our results here, it is sufficient to keep in mind that the minimum total weight of a fractional majority coloring is at most the majority chromatic number. 

Kreutzer et al~ \cite{kreutzer} ask what is the smallest constant $K$ such that every digraph admits a fractional majority coloring with total weight at most $K$. This is yet another direction to approach Conjecture~\ref{mainconj} from. Proving that there is a fractional majority coloring with total weight $3$ for every digraph would certainly be an easier task. 
Here we take the first step in this direction and show that the upper bound of 4, which follows from the fact that every digraph is majority $4$-colorable, can be slightly improved.

\begin{theorem}\label{4-c}
Every digraph $D$ admits a fractional majority coloring with total weight at most $3.9602$.
\end{theorem}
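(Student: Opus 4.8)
I would work with the linear‑programming (measure‑space) form of a fractional majority coloring: an assignment $v\mapsto A_v$ of measure‑$1$ subsets of a measure space $\Omega$ of total measure $k$ such that $\sum_{u\in N^+(v)}\mu(A_v\cap A_u)\le \tfrac12 d^+(v)$ for every $v$, with total weight $k=\mu(\Omega)$; a majority $k$‑coloring is the special case $\Omega=\{1,\dots,k\}$, $A_v=\{c(v)\}$, which is why every digraph has a fractional majority coloring of weight $4$. The plan is to start from a fixed majority $4$‑coloring $(V_1,\dots,V_4)$, which realizes $k=4$ with four unit intervals, and to exploit the one structural fact it provides: in any majority $4$‑coloring every $v$ has at most $\tfrac12 d^+(v)$ out‑neighbors in its own class $V_{c(v)}$, hence at least $\tfrac12 d^+(v)$ out‑neighbors spread over the other three classes.

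To reach weight $4-\delta$ I would compress the four unit intervals onto a circle of circumference $4-\delta$: place the classes equally spaced and give each vertex $v$ of class $i$ a unit arc $A_v$ based at the $i$‑th interval together with a small, vertex‑dependent offset $\theta_v$. On this circle two unit arcs overlap in measure about $1$, about $\delta/4$, or $0$ according as the two classes are equal, consecutive, or opposite, so, ignoring offsets, $\sum_{u\in N^+(v)}\mu(A_v\cap A_u)\approx d^+(v,V_i)+\tfrac{\delta}{4}\bigl(d^+(v,V_{i-1})+d^+(v,V_{i+1})\bigr)$, which by the structural fact is at most $\tfrac12 d^+(v)+\tfrac{\delta}{8}d^+(v)$ — an overshoot of $\tfrac{\delta}{8}d^+(v)$ for the worst vertex. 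The offsets are there to erase this: $\theta_v$ should lean $v$'s arc toward the consecutive class in which $v$ has the fewest out‑neighbors and away from the one with the most, and because $v$ has at least $\tfrac12 d^+(v)$ out‑neighbors outside $V_i$ there is always slack to trade. Since the offsets interact, I would take them independent and uniform in a small window, apply the Lov\'asz Local Lemma together with concentration to find a good configuration for vertices whose out‑degree exceeds a constant threshold, and for vertices below the threshold pass to the subdigraph they induce — which has bounded out‑degree and is therefore majority $3$‑choosable by Theorem~\ref{maxdegree} — blending this clean $3$‑coloring into the construction on a small slice of $\Omega$.

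The hard part is the book‑keeping forced by two kinds of bad vertices: the genuinely extremal vertex — even out‑degree, half its out‑neighbors in its own class and half in a single consecutive class — for which even the optimal offset only barely meets the bound, and the cascade of offsets, where the arcs of a vertex, of its out‑neighbors in its own class, and of its out‑neighbors in the consecutive classes may all have been pushed toward one another so that the pairwise overlaps exceed the naive estimates. To control this I would not take an arbitrary majority $4$‑coloring but an extremal one — for instance one minimizing the number of monochromatic out‑edges, or a weighted variant — and argue by a local‑switching/exchange argument that a vertex concentrated this badly forces its in‑neighborhood to be restricted, so that the genuinely problematic vertices themselves induce a sparse digraph that can be absorbed by the bounded‑degree patch. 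Balancing the weight lost to compression against the worst overlap surviving the offsets, the Local Lemma constraints, and the cost of the patch then yields an optimization in $\delta$ and the blending parameters whose optimum is the value of $\delta$ with $4-\delta=3.9602$; I expect the main obstacle to be exactly making that patch cheap in $\Omega$ without reintroducing the overshoot.
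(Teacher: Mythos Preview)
Your proposal rests on a reformulation that is not equivalent to the paper's definition. The paper defines a fractional majority coloring as a weighting of the \emph{stable sets} (sets with no popular vertex) so that every vertex is covered with total weight at least $1$; the measure-space translation of this requires that for \emph{every} point $\omega\in\Omega$ and every $v$ with $\omega\in A_v$, at most half of $N^+(v)$ lies in $\{u:\omega\in A_u\}$. Your condition $\sum_{u\in N^+(v)}\mu(A_v\cap A_u)\le\tfrac12 d^+(v)$ is only the \emph{average} of that pointwise constraint over $\omega\in A_v$, and it is strictly weaker: for example, if $v$ has three out-neighbours $u_1,u_2,u_3$ with $A_v=[0,1]$, $A_{u_1}=A_{u_2}=[0,\tfrac34]$, $A_{u_3}\cap[0,1]=\emptyset$, then your inequality holds with equality, yet for every $\omega\in[0,\tfrac34]$ the set $\{x:\omega\in A_x\}$ makes $v$ popular and is therefore not stable. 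So even if the circle-compression, offsets, Local Lemma, and patching all worked exactly as you describe, the output would not be a fractional majority coloring in the sense of the theorem.

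Beyond this, the argument is a chain of hopeful heuristics rather than a proof: the ``extremal'' majority $4$-coloring and the local-switching claim that bad vertices induce a sparse digraph are asserted without any mechanism, and the ``blending'' of a $3$-coloring on a bounded-out-degree piece into the circle construction has no definition --- it is unclear how a proper majority $3$-coloring of a subdigraph interacts with arcs leaving that subdigraph under your averaged overlap constraint. The paper's proof goes by an entirely different route: it uses LP duality to reduce the question to exhibiting, for any vertex weighting, a single stable set carrying at least a $p$-fraction of the total weight (Lemma~\ref{alterations}), and then builds such a set probabilistically --- each vertex is included independently with probability roughly $0.45$, with a separate deterministic treatment of the out-degree-$1$ vertices (cycle-breaking followed by greedy inclusion along a topological order), after which popular vertices are deleted. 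The numerical constant $3.9602$ is $1/p$ for the resulting lower bound $p\approx 0.2525$ on $\Pr[v\in X\text{ and not popular}]$, not the outcome of an optimization over a compression parameter $\delta$.
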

Our proof is the combination of an intricate probabilistic coloring with some deterministic alteration.

In the second theorem of the section we show that digraphs with sufficiently large minimum out-degree have fractional majority colorings with total weight arbitrarily close to $2$. The results in \cite{kreutzer} obtained using the Local Lemma instead only give an upper bound of $3$ under stronger assumptions (an upper bound on the maximum degree). This result further highlights that the main difficulty of deciding Conjecture \ref{mainconj} might
lie with digraphs of low out-degrees.

\begin{theorem}\label{fracldeg}
There exists a constant $C>0$ such that for every $\varepsilon >0$ and every digraph $D$ with $\delta^+(D)\geq C(1/\varepsilon)^2\ln(2/\varepsilon)$, there exists a fractional majority coloring of $D$ with total weight at most $2+\varepsilon$.
\end{theorem}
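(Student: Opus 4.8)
The plan is to show directly that a suitable randomized coloring scheme yields, with positive probability, a fractional majority coloring whose total weight is close to $2$. First I would recall the definition of a fractional majority coloring as a probability distribution $\mathcal{D}$ over colorings (or over a ground set of colors together with a probabilistic assignment) whose ``total weight'' is the expected number of colors used, and where the defining constraint is that for every vertex $v$, the probability that $v$ agrees with more than half of its out-neighbours is $0$ (equivalently, one asks that the coloring be a genuine majority coloring deterministically but averages the palette size). The cleanest route is the one used for these LP-relaxations: take a large integer palette $[n]$ with $n=n(\varepsilon)$, put weight $\frac{2+\varepsilon}{n}$ on each color (so the total weight is $2+\varepsilon$), and exhibit a single majority coloring using colors from $[n]$ such that each color-class behaves well; then the fractional constraint is satisfied by symmetrizing over all permutations of the palette. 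The real content is therefore a union-bound/Lov\'asz-Local-Lemma computation showing that if $\delta^+(D)$ is large enough in terms of $\varepsilon$, a random assignment works.

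Concretely, the key steps are as follows. Assign to each vertex $v$ an independent uniformly random color from $[n]$, where $n = \lceil 2+\varepsilon\rceil$ is taken to be a small constant — in fact the natural choice is $n=3$, and instead we tune the \emph{weights}: put weight close to $\tfrac23+\tfrac\varepsilon3$ on each of three colors. For a vertex $v$ with $d^+(v)=d\geq \delta^+(D)$, let $A_v$ be the bad event that strictly more than $d/2$ of its out-neighbours receive color $c(v)$. Conditioning on $c(v)=i$, the number of out-neighbours with color $i$ is a sum of independent indicators, so by a Chernoff bound $\Pr[A_v]\le \exp(-c_0\,\varepsilon^2 d)$ for an absolute constant $c_0$ (this is where the quadratic dependence $\delta^+\gtrsim(1/\varepsilon)^2$ enters: we need the threshold $d/2$ to exceed the mean $(\tfrac13+O(\varepsilon))d$ by a relative margin $\Omega(\varepsilon)$, and Chernoff then costs $\exp(-\Omega(\varepsilon^2 d))$). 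Each event $A_v$ depends only on $c(v)$ and on $c(w)$ for $w\in N^+(v)$, hence is mutually independent of all but at most $D_{\max}$ other events where $D_{\max}$ can be bounded in terms of in- and out-degrees — but to avoid any dependence on $\Delta$, I would instead iterate or use a more careful argument: partition the vertex set or apply the Local Lemma in a form that only needs $\Pr[A_v]$ to beat a function of the \emph{dependency degree}, and here the $\ln(2/\varepsilon)$ factor is exactly what one gains to absorb the $\log(\text{degree})$ term, so that $\delta^+(D)\ge C(1/\varepsilon)^2\ln(2/\varepsilon)$ suffices. Once all $A_v$ are avoided, the coloring is a genuine majority $3$-coloring, and averaging over the $3!$ permutations of the three colors (each still a majority coloring) gives a fractional majority coloring in which each color carries total weight $\tfrac13(2+\varepsilon)\cdot 3 = 2+\varepsilon$; more precisely one uses the LP dual / the symmetrized distribution to read off total weight $2+\varepsilon$.

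The main obstacle I anticipate is handling the dependency structure without introducing a spurious dependence on $\Delta^-(D)$ or $\Delta(D)$: the hypothesis only controls $\delta^+(D)$, so a vertex may have arbitrarily large in-degree and hence $A_v$ may influence enormously many other events. The standard Lov\'asz Local Lemma as used in \cite{kreutzer} pays for this with a $\log$ of the dependency degree and therefore needs an upper bound on $\Delta$. To get around this I would either (i) observe that for the \emph{fractional} relaxation one does not need a single good coloring but only a distribution, so one can afford a more wasteful per-vertex argument — e.g.\ color greedily along a degeneracy-type ordering, or (ii) more likely, use the fact that $A_v$ depends on $c(w)$ for $w\in N^+(v)$ only through whether $c(w)=c(v)$, and group the bad events by a fractional-relaxation trick that lets the Local Lemma constant depend only on out-degrees. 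Making this last point rigorous — so that the final bound reads $\delta^+(D)\ge C(1/\varepsilon)^2\ln(2/\varepsilon)$ with \emph{no} other parameter — is the technical heart of the proof, and the $\ln(2/\varepsilon)$ rather than $\ln(1/\varepsilon)$ in the statement is the telltale sign that a small amount of slack is being spent there.
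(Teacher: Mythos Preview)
Your proposal has a genuine gap, and you have in fact identified it yourself: the Lov\'asz Local Lemma approach cannot work under the hypothesis of the theorem, because only $\delta^+(D)$ is bounded below and a vertex may have arbitrarily large in-degree, so the dependency degree of the events $A_v$ is uncontrolled. Your suggested fixes (grouping bad events, a ``fractional-relaxation trick'' for the LLL constant) are not made precise, and I do not see how to make them work; the paper itself, when it does apply the LLL in~\cite{kreutzer}, explicitly requires an upper bound on $\Delta^-$ or on $|V(D)|$ for exactly this reason. Your interpretation of a fractional majority coloring as a distribution over genuine majority colorings is also too strong: the definition is a weighting of \emph{stable sets} (sets containing no popular vertex) that covers every vertex with total weight at least~$1$, and this is strictly weaker than producing a single majority coloring.

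The paper's proof avoids the dependency problem entirely by passing through LP duality (Observation~\ref{duality} and Lemma~\ref{alterations}): it suffices to exhibit a random subset $X\subseteq V(D)$ such that for \emph{each individual} vertex $v$, the probability that $v\in X$ and $v$ is not popular in $X$ is at least some $p$; then the dual LP forces the fractional majority chromatic number to be at most $1/p$. There is no need for all vertices to be simultaneously non-popular, so no LLL and no dependency analysis. Concretely, include each vertex in $X$ independently with probability $p=\tfrac12-\sqrt{(\ln N)/N}$ where $N=\delta^+(D)$; Hoeffding gives $\Pr\bigl[|X\cap N^+(v)|\ge d^+(v)/2\bigr]\le N^{-2}$, so $p-N^{-2}\ge\tfrac{1}{2+\varepsilon}$ once $N\ge C(1/\varepsilon)^2\ln(2/\varepsilon)$. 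Your option~(i) (``one does not need a single good coloring but only a distribution, so one can afford a more wasteful per-vertex argument'') was pointing in exactly this direction, but the concrete mechanism is LP duality plus a single random stable set, not a greedy or LLL-based construction.
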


\subsubsection{Majority $2$-Colorability}

We prepare the investigation of majority $3$-colorable digraphs in Section~\ref{sec:sparsedigraphs} with an analysis of majority $2$-colorings in Section~\ref{sec:nooddcycles}. An important special case will be digraphs which do not contain a directed cycle of odd length.
\begin{theorem}\label{corfor2choosability}
If $D$ is a digraph without odd directed cycles, then $D$ is majority $2$-choosable. 
\end{theorem}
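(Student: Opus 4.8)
The plan is to prove, by induction on $|V(D)|+|E(D)|$, the following list/weighted strengthening, which contains the theorem by taking $\beta_1(v)=\beta_2(v)=\lfloor d^+(v)/2\rfloor$: \emph{if $D$ has no odd directed cycle and every vertex $v$ is given a two‑element list $L(v)=\{1_v,2_v\}$ together with integers $\beta_1(v),\beta_2(v)$ satisfying $\beta_1(v)+\beta_2(v)\ge d^+(v)-1$, then there is a choice $c(v)\in L(v)$ for which every $v$ has at most $\beta_{c(v)}(v)$ out‑neighbours $w$ with $c(w)=c(v)$.} Carrying the budgets around is what makes the induction self‑contained: whenever a vertex is deleted and coloured, its colour has to be ``charged'' against the budgets of its in‑neighbours. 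Note that this formulation also absorbs the passage from one strongly connected component to the next (a vertex whose out‑neighbours all lie in already‑coloured components simply has its external out‑neighbours folded into its budgets).

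First I would dispose of a routine reduction. If some vertex $u$ has $\beta_i(u)\ge d^+(u)$ for one of its colours (in particular if $d^+(u)=0$), colour $u$ with $i_u$, delete it, and for every in‑neighbour $w$ of $u$ lower by one the budget of colour $i_u$ at $w$ whenever $i_u\in L(w)$; since $d^+(w)$ drops by one at the same time the inequality $\beta_1+\beta_2\ge d^+-1$ is preserved, and the inductive colouring of $D-u$ together with $c(u)=i_u$ is as required. Hence we may assume $0\le\beta_i(v)\le d^+(v)-1$ for all $v$ and $i$; in particular $\delta^+(D)\ge1$.

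Since $\delta^+(D)\ge1$ and $D$ has no odd directed cycle, $D$ contains an even directed cycle, and I would take one of smallest length, $\vec C=v_1\to v_2\to\cdots\to v_{2\ell}\to v_1$, so that it is chordless: for $2\ell>2$ the only edges of $D$ among $v_1,\dots,v_{2\ell}$ are the cycle edges (the digon case $2\ell=2$ being simpler and treated separately). The heart of the argument is to break $\vec C$: delete its $2\ell$ edges, apply the inductive hypothesis to $D-E(\vec C)$, and re‑insert $\vec C$. Each $d^+(v_i)$ drops by one when the cycle edges are removed, so the budget inequality at every $v_i$ gains one unit of slack; the point of choosing $\vec C$ \emph{even} is that an even cycle is properly $2$‑choosable, so this slack is enough to simultaneously insist that the colouring returned on $D-E(\vec C)$ be \emph{proper along $\vec C$}, i.e. $c(v_i)\neq c(v_{i+1})$ for all $i$. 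If that holds, re‑inserting $\vec C$ creates no new monochromatic out‑edge and every $v_i$ remains within its original budget. Concretely this means recursing not on a clean instance of the lemma but on $D-E(\vec C)$ with $\vec C$ re‑attached as a collection of ``must be bichromatic'' edges, and then eliminating that cycle one edge at a time, the chordlessness of $\vec C$ being used to keep the elimination coherent around the whole cycle; the exceptional case in which all of $v_1,\dots,v_{2\ell}$ carry identical lists can be handled directly by colouring $\vec C$ alternately, since then (again because $\vec C$ is chordless) no two of $v_1,\dots,v_{2\ell}$ receiving the same colour are adjacent.

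The step I expect to be the main obstacle is precisely this last one: organising the bookkeeping so that the colouring produced on $D-E(\vec C)$ can be kept at once within every budget and proper along $\vec C$, avoiding the off‑by‑one error that naively appears at the two endpoints of a deleted arc. This is, in essence, the circularity one meets inside a strongly connected digraph with no odd directed cycle: such a digraph has a vertex bipartition with all edges crossing, and the colour of a vertex on one side depends on the colours on the other side and vice versa. The shortest (hence chordless, even) directed cycle is the minimal carrier of this circularity, and breaking it cleanly is what allows the induction to close.
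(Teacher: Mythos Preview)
Your budget strengthening is natural, and the reduction step (colouring and deleting a vertex with $\beta_i(u)\ge d^+(u)$) is correct---it mirrors the greedy phase in the paper's argument. But the main inductive step does not close. After deleting $E(\vec C)$, your hypothesis on $D-E(\vec C)$ returns \emph{some} colouring within the slackened budgets; it does not, and cannot, return one that is also proper along $\vec C$. Budgets are symmetric in the out-neighbours: they bound \emph{how many} out-neighbours share $v$'s colour, not \emph{which} ones, so the constraint ``$c(v_i)\neq c(v_{i+1})$'' for a specific arc is simply not expressible in your stated hypothesis. Your proposed fix---re-attaching $\vec C$ as ``must be bichromatic'' arcs and eliminating them one at a time---therefore steps outside the induction you set up, and you give no mechanism for carrying such constraints through the recursion. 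You yourself flag this as ``the main obstacle''; as written it is a sketch with the decisive step missing, not a proof.

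The paper sidesteps exactly this difficulty by a different decomposition. It first proves the non-list version with pre-coloured sinks (Proposition~\ref{proposition:nooddcycles}) by induction on the number of strong components: each strong component is bipartite (Lemma~\ref{lemma:strongcompbipartite}), so after greedily colouring vertices of the top component $X$ as long as some colour already appears on at least half of the out-neighbourhood, one colours the \emph{entire} remaining stuck set $U\subseteq X$ according to the bipartition of $X$. Because all edges of $D[U]$ cross that bipartition, colouring $U$ by sides makes every internal edge bichromatic at once---no cycle-by-cycle bookkeeping is needed, and the off-by-one issue you anticipate never arises. The list version (Theorem~\ref{theorem:2choosability}, from which Theorem~\ref{corfor2choosability} is immediate) then follows by partitioning vertices according to their list and applying the proposition to each part, the vertices in other parts acting as pre-coloured sinks. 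Your shortest even cycle is morally an attempt to locate a small bipartite piece, but a single cycle is too small: it is the bipartition of the whole strong component that absorbs the stuck vertices in one stroke.
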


In Section~\ref{sec:npcompleteness} we are concerned with another open question posed in \cite{kreutzer}, where it was asked whether there is a characterisation of digraphs that have a majority 2-coloring (or a polynomial time algorithm to recognise such digraphs). Our last theorem answers this (most likely) in the negative.

\begin{theorem}\label{thm:nphardness}
Deciding whether a given digraph $D$ is majority $2$-colorable is NP-complete.
\end{theorem}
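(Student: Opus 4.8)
The plan is to reduce from a known NP-complete problem by constructing a gadget that forces the coloring to behave like a satisfying assignment. A natural candidate is NAE-3SAT (not-all-equal 3SAT) or monotone variants thereof, since majority $2$-colorability has an inherently balanced, ``no monochromatic domination'' flavor that matches the NAE constraint. First I would recall the basic local condition: in a $2$-coloring, a vertex $v$ with out-degree $d^+(v)$ is satisfied iff its own color appears on at most $\lfloor d^+(v)/2\rfloor$ of its out-neighbors. The key building blocks will be (i) \emph{equalizer gadgets} that force two vertices to receive the same color, (ii) \emph{negation gadgets} forcing two vertices to differ, and (iii) a \emph{clause gadget} on the three literal-vertices of a clause whose satisfiability as a majority $2$-coloring is equivalent to the three literals not all being equal. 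Odd directed cycles (in particular $\vec{C}_3$) are the prototypical obstruction to majority $2$-coloring, so I expect the clause gadget to be some small digraph containing directed triangles wired so that a proper-like ``not-all-equal'' condition on the three literal-vertices is exactly what allows the triangle(s) to be resolved.

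The construction would proceed as follows. Given a 3-CNF formula $\varphi$ (in the appropriate restricted form), create a vertex $x_i$ for each variable and a vertex $\bar x_i$ for its negation, joined by a negation gadget so that any majority $2$-coloring must give them opposite colors. For each clause $C = (\ell_1 \vee \ell_2 \vee \ell_3)$, attach a clause gadget to the three literal-vertices; the gadget is designed so that it admits a majority $2$-coloring extension if and only if $\{\ell_1,\ell_2,\ell_3\}$ are not all assigned the same color. Taking the union over all clauses and variables yields a digraph $D_\varphi$ computable in polynomial time. One then argues: if $\varphi$ has a NAE-satisfying assignment, color $x_i$ by that assignment, $\bar x_i$ oppositely, and extend inside each gadget using the gadget's built-in good coloring; conversely, any majority $2$-coloring of $D_\varphi$ restricts to a consistent truth assignment (by the negation gadgets) that NAE-satisfies every clause (by the clause gadgets). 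Membership in NP is immediate since a $2$-coloring is a polynomial-size certificate checkable in polynomial time.

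The main obstacle is designing the clause and auxiliary gadgets so that the local majority conditions at the \emph{interface} vertices are not accidentally over- or under-constrained. Concretely, each literal-vertex $x_i$ may feed into several clause gadgets and the negation gadget, and the majority condition at $x_i$ couples all of these; one must either (a) route all real constraints through fresh private copies of $x_i$ connected by rigid equalizer gadgets so that $x_i$ itself has few out-neighbors, or (b) carefully pad out-degrees (adding pendant out-neighbors, using the remark in the excerpt that adding an edge out of an odd-out-degree tail preserves majority colorings) so that the count at $x_i$ is automatically satisfied regardless of the neighbors' colors, decoupling the global constraints. Getting a clause gadget that is simultaneously (i) $2$-colorable precisely on NAE inputs, (ii) imposes no side constraint on the literal-vertices beyond ``not all equal'', and (iii) has every internal vertex's majority condition satisfiable, is the delicate combinatorial core; I would likely build it from a directed triangle whose three vertices are ``activated'' by XOR-type sub-gadgets reading the literal colors, so that an all-equal input leaves a monochromatic $\vec{C}_3$ unavoidable while any NAE input breaks it.

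Once the gadgets are fixed, the correctness proof splits cleanly into the forward direction (explicit coloring from a NAE-assignment, verifying each vertex locally gadget-by-gadget) and the backward direction (extracting the assignment and checking, for each clause, that an all-equal literal pattern would force some internal vertex — the one on the directed triangle — to violate its majority condition). The reduction is clearly polynomial in $|\varphi|$ since each gadget has constant size, which together with NP-membership completes the proof. The only quantitative care needed is in the degree bookkeeping of the interface vertices, which the padding trick handles uniformly.
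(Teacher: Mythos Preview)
Your high-level strategy---reduce from an NAE-type problem via a clause gadget built around a directed triangle---is exactly what the paper does, and your diagnosis that the interface vertices are the delicate point is spot-on. However, your proposal remains a plan rather than a proof: you never exhibit a concrete clause gadget, and the machinery you propose to handle the interface problem (equalizer gadgets, negation gadgets, out-degree padding to decouple shared literal-vertices) is all unnecessary. The paper's key simplification is twofold. First, it reduces from $2$-coloring of $3$-uniform hypergraphs (equivalently, \emph{monotone} NAE-3SAT), so no negation gadgets are needed at all. Second, and more importantly, the clause gadget $D_9$ is designed so that the three ``literal'' vertices $x_1,x_2,x_3$ are \emph{sinks}: they have out-degree~$0$ inside the gadget, hence the majority condition at each $x_i$ is vacuous. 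When several gadgets share a vertex (because hyperedges intersect), that vertex is still a sink in the union, so there is no coupling whatsoever between gadgets---the global coloring is a majority coloring iff its restriction to each gadget is. This single design choice eliminates the entire ``interface bookkeeping'' obstacle you identified, and makes the correctness proof a two-line observation rather than a case analysis over padded degrees.
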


\bigskip

\bigskip 

\paragraph{Organization of the paper.} 
In Section~\ref{sec:nooddcycles} we obtain Theorem~\ref{corfor2choosability} as a consequence of a more general result (Theorem~\ref{theorem:2choosability}). This result is crucial for the proofs of Theorems~\ref{chrom6},~\ref{dichrom3},~\ref{maxdegree},~\ref{lchrom6},~\ref{ldichrom3}, which are presented in Section~\ref{sec:sparsedigraphs}. In Section~\ref{sec:fractional} we treat fractional majority colorings and prove Theorems~\ref{4-c} and~\ref{fracldeg}. Finally, we present the proof of Theorem~\ref{thm:nphardness} in Section~\ref{sec:npcompleteness} and conclude with some open problems in Section~\ref{concl}.

\section{Digraphs without Odd Directed Cycles} \label{sec:nooddcycles}
We have seen that acyclic digraphs as well as bipartite digraphs are majority $2$-colorable. We have also seen that odd directed cycles are canonical examples of digraphs having no majority $2$-coloring. It is therefore natural to try unifying these results and ask whether every digraph without an odd directed cycle is majority $2$-colorable. In this section, we answer this question positively. We start with a simple observation:
\begin{lemma}\label{lemma:strongcompbipartite}
A digraph $D$ contains no odd directed cycles if and only if all its strong components are bipartite.
\end{lemma}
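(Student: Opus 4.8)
The plan is to prove both implications through the intermediate notion of a \emph{closed directed walk}, using the standard reduction that $D$ has an odd directed cycle if and only if $D$ has a closed directed walk of odd length. One direction of this reduction is immediate. For the other, I would argue that any closed directed walk $W$ that is not already a cycle repeats a vertex, and that splitting $W$ at such a repeated occurrence expresses it as a concatenation of two strictly shorter closed directed walks whose lengths add up to $|W|$; iterating, $W$ decomposes into directed cycles with lengths summing to $|W|$, so if $|W|$ is odd then at least one of them is an odd directed cycle. I also record the elementary fact that every directed cycle of $D$, being strongly connected, is contained in a single strong component $S$, and that for such an $S$ one has $U(S) = U(D)[V(S)]$, so every edge of $U(S)$ arises from an arc of $S$ in one direction or the other.

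For the ``if'' direction, assume every strong component of $D$ has bipartite underlying graph and let $W$ be a directed cycle of $D$. Then $W$ lies inside some strong component $S$, and reading $W$ as a closed walk in $U(S)$ of the same length shows $|W|$ is even, since bipartite graphs contain no closed walk of odd length. Hence $D$ has no odd directed cycle.

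For the ``only if'' direction, assume $D$ has no odd directed cycle, equivalently (by the reduction) no closed directed walk of odd length. Fix a strong component $S$ and a vertex $r \in V(S)$. For each $v \in V(S)$ I would pick, using strong connectivity, a directed walk from $r$ to $v$ and let $\phi(v) \in \{0,1\}$ be the parity of its length. The key point is that $\phi$ is well defined: if $P_1, P_2$ are directed $r$--$v$ walks and $Q$ is a directed $v$--$r$ walk (which exists as $S$ is strong), then $P_1Q$ and $P_2Q$ are closed directed walks at $r$, hence both of even length, forcing $P_1$ and $P_2$ to have equal parity. Then for every arc $(x,y)$ of $S$, appending $(x,y)$ to an $r$--$x$ walk yields an $r$--$y$ walk, so $\phi(x)\neq\phi(y)$; since every edge of $U(S)$ comes from such an arc, $\phi$ is a proper $2$-coloring of $U(S)$, i.e. $U(S)$ is bipartite.

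The argument is entirely elementary, so there is no real obstacle; the only step needing a little care is the well-definedness of the potential $\phi$, which is precisely where strong connectivity and the absence of odd closed directed walks are combined, together with the routine observation that an odd closed walk always contains an odd cycle.
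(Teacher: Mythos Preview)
Your argument is correct. The ``if'' direction matches the paper's one-line justification (a directed cycle lies in a single strong component, and bipartite graphs have no odd closed walks). For the ``only if'' direction, however, you take a genuinely different route: the paper simply appeals to an ear decomposition of a strongly connected digraph, observing that if every directed cycle is even then each ear must have even length, whence the digraph is bipartite. You instead build a parity potential $\phi$ from a fixed root, using strong connectivity twice---once to define $\phi$ and once (via a return walk $Q$) to show it is well defined---together with the reduction from odd closed walks to odd directed cycles. Your approach is more self-contained and makes the role of strong connectivity completely explicit, at the cost of a little more writing; the paper's ear-decomposition argument is terser but presupposes that the reader knows the ear decomposition of strongly connected digraphs and can fill in why even directed cycles force even ears. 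Both are standard and equally valid; neither yields more than the lemma requires.
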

\begin{proof}
Sufficiency of this condition is obvious, as a directed cycle is always contained in a single strong component. For the reverse direction, it suffices to observe that if $D$ is strongly connected and all directed cycles have even length, then $D$ is bipartite. However, this statement can be easily verified by considering an ear decomposition of $D$.
\end{proof}
\begin{proposition}\label{proposition:nooddcycles}
Let $D$ be a digraph which contains no odd directed cycles. Then $D$ is majority $2$-colorable. Moreover, such a coloring can be chosen to extend any given pre-coloring of the sinks of $D$ with colors $1,2$.
\end{proposition}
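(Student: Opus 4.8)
My plan is to reduce the statement, via Lemma~\ref{lemma:strongcompbipartite}, to a coloring problem on a single bipartite digraph, and then to settle that by a color-switching (local search) argument whose termination is forced by bipartiteness. Since the condensation of $D$ is acyclic, I would order the strong components $C_1,\dots,C_m$ so that every arc between two distinct components runs from a higher-indexed component to a lower-indexed one, and color them in this order. If $C_i=\{v\}$ is a sink of $D$, I keep its prescribed color --- the majority condition at $v$ is empty. Otherwise, when $C_i$ is processed, every out-neighbour (in $D$) of a vertex of $C_i$ that lies outside $C_i$ is in some $C_j$ with $j<i$ and has already been colored; so it remains to prove:

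\emph{Main step.} Let $C$ be a bipartite digraph in which every vertex $v$, besides its out-neighbours inside $C$, has some fixed collection of already-colored ``external'' out-neighbours. Then $V(C)$ admits a $2$-coloring under which every vertex of $C$ has at most half of its out-neighbours (inside $C$ and external together) in its own color.

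To prove the Main step I would fix a bipartition $(A,B)$ of $C$, let $\chi_0$ be the proper $2$-coloring with $\chi_0(A)=\{1\}$ and $\chi_0(B)=\{2\}$, and run the natural local search: starting from $\chi_0$, repeatedly pick a vertex that is \emph{unhappy} --- i.e.\ strictly more than half of its out-neighbours share its color --- and switch its color. Switching an unhappy vertex makes it happy, and changes the happiness only of its in-neighbours inside $C$. The key is a monotonicity that uses bipartiteness: since every arc of $C$ joins $A$ to $B$, for any vertex $v$ whose current color differs from $\chi_0(v)$ each out-neighbour $w$ of $v$ inside $C$ satisfies $\chi_0(w)\neq\chi_0(v)$, so $\chi_0(w)$ equals $v$'s current color; hence, among the out-neighbours of $v$ inside $C$, exactly those that have not yet been switched are currently monochromatic with $v$. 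Writing $F$ for the set of vertices whose color currently differs from $\chi_0$, the number of out-neighbours of $v$ sharing $v$'s color therefore equals $|(N^+(v)\cap V(C))\setminus F|$ plus a fixed external term, which only decreases as $F$ grows. So a vertex that has been switched (and thereby made happy) never becomes unhappy again, hence is never switched again; thus $F$ grows at every step, the search stops after at most $|V(C)|$ switches, and the final coloring has no unhappy vertex. Putting the colorings of $C_1,\dots,C_m$ together gives the desired majority $2$-coloring of $D$ extending the pre-coloring of the sinks.

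The only real obstacle is the Main step, and within it the monotonicity observation; everything else is bookkeeping about the condensation. It is essential that each strong component is bipartite: in a general digraph, switching a vertex can create new monochromatic in-arcs and make previously happy in-neighbours unhappy, so the local search need not terminate --- and indeed $\vec{C}_3$ has no majority $2$-coloring at all. Thus the hypothesis that $D$ contains no odd directed cycle, via Lemma~\ref{lemma:strongcompbipartite}, is exactly what makes the argument go through.
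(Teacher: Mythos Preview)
Your argument is correct. The overall architecture---reduce via Lemma~\ref{lemma:strongcompbipartite} to bipartite strong components and process them in a topological order of the condensation, carrying along the already-colored external out-neighbours---matches the paper's. Where you diverge is in the extension step for a single bipartite component. The paper starts with the component \emph{uncolored} and runs a greedy procedure: as long as some uncolored vertex $x_0$ has one color appearing among at least $d^+(x_0)/2$ of its already-colored out-neighbours, it colors $x_0$ with the minority color; when this halts, every remaining vertex has both color counts strictly below half, and those vertices are finished off with the bipartition. You instead start with the component \emph{fully colored} by the bipartition and run a local search, switching unhappy vertices; bipartiteness gives the monotonicity that a switched vertex never becomes unhappy again, forcing termination. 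Both arguments exploit bipartiteness at exactly one point, and both are short; your local-search formulation is arguably cleaner (a single monotone potential replaces the paper's invariant-plus-cleanup), while the paper's greedy version makes slightly more explicit which vertices end up with their bipartition color and which do not.
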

\begin{proof}
We prove the statement by induction on the number $s \ge 1$ of strong components of $D$. Suppose first that $s=1$, i.e. $D$ is strongly connected. Then by Lemma~\ref{lemma:strongcompbipartite} $D$ is bipartite and therefore majority $2$-colorable. As $D$ is either a single vertex or contains no sinks, the claim follows.

Now let $s \ge 2$ and suppose that the statement holds true for all digraphs with at most $s-1$ strong components. We now distinguish two cases: Either, $D$ is an independent set of $s$ vertices, and therefore, the claim holds trivially true. If there exists at least one arc in $D$, there has to be a strong component of $D$ containing no sinks such that there are no arcs entering it. Let $X$ be the vertex set of this component. 

Now let a pre-coloring of the sinks of $D$ with $1,2$ be given. By the choice of $X$, $D-X$ has the same set of sinks as $D$ and $s-1$ strong components. By the inductive assumption, there exists a majority $2$-coloring $c:V(D) \setminus X \rightarrow \{1,2\}$ of $D-X$ which extends the pre-coloring of the sinks. By Lemma~\ref{lemma:strongcompbipartite}, there exists a bipartition $\{A,B\}$ of $D[X]$. 

For any subset $W \subseteq X$ equipped with a vertex-coloring $c_W:V(D) \setminus W \rightarrow \{1,2\}$ of $D-W$, any vertex $x \in W$, and any $i \in \{1,2\}$, denote by $d(c_W,i,x)$ the number of out-neighbors of $x$ which lie in $V(D) \setminus W$ and have color $i$ under $c_W$. 

We now claim that there exists a subset $U \subseteq X$ and a $1,2$-coloring $c_{U}$ of $D-U$ which extends $c$, such that 

\begin{itemize}

\item Every vertex $x \in V(D) \setminus U$ has at least $\frac{d^+(x)}{2}$ out-neighbors in $V(D) \setminus U$ with a color different from $c_U(x)$. 
\item Every vertex $x \in U$ fulfills $\max\{d(c_{U},1,x),d(c_{U},2,x)\} < \frac{1}{2}d^+(x)$.

\end{itemize}

In order to find such a set, we apply the following procedure:

We keep track of a pair $(W,c_W)$, consisting of a subset $W \subseteq X$ and a vertex-coloring $c_W:V(D) \setminus W \rightarrow \{1,2\}$ extending $c$. As an invariant we will keep the first of the two above properties, i.e. we assert that every vertex $x \in V(D) \setminus W$ has at least $\frac{d^+(x)}{2}$ out-neighbors with a different color according to $c_W$. 

We initialize $W:=X$, $c_W:=c$. It is clear that this assignment satifies the invariant (remember that $c$ is a majority coloring of $D-X$, and that there are no edges entering $X$).

As long as a vertex $x_0 \in W$ with $\max\{d(c_{U},1,x_0),d(c_{U},2,x_0)\} \ge \frac{1}{2}d^+(x_0)$ exists, we choose such a vertex. We put $W':=W \setminus \{x_0\}$, and define a coloring $c_{W'}$ of $D-W'$ according to  \[c_{W'}(x):=\begin{cases} c_W(x), & \text{if } x \neq x_0 \cr 1, & \text{if } x=x_0, d(c_W,1,x_0)<d(c_W,2,x_0) \cr 2, & \text{if } x=x_0, d(c_W,1,x_0) \ge d(c_W,2,x_0) \end{cases}.\] 

It is easily verified that the coloring $c_{W'}$ also fulfills the invariant, since by definition $x_0$ has at least $\max\{d(c_{U},1,x_0),d(c_{U},2,x_0)\} \ge \frac{1}{2}d^+(x_0)$ out-neighbors in $D-W'$ of different color. Furthermore, for every vertex $x \in W$, the number of out-neighbors of different color does not decrease by coloring $x_0$.

Finally we update according to $(W,c_W):=(W',c_{W'})$.

In the moment the procedure terminates, we have found a subset $U:=W \subseteq X$ and a $1,2$-coloring $c_U$ of $D-U$ extending $c$ with the property that every vertex $x \in V(D) \setminus U$ has at least $\frac{d^+(x)}{2}$ out-neighbors with different color according to $c_U$. Since the procedure terminated, we furthermore have $\max\{d(c_{U},1,x),d(c_{U},2,x)\} < \frac{1}{2}d^+(x)$ for every vertex $x \in U$. This shows that $U$ satisfies both of the conditions stated above.

We now finally extend the coloring $c_U$ of $V(D) \setminus U$ to a $1,2$-coloring of $D$ by giving color $1$ to each vertex in $A \cap U$ and color $2$ to every vertex in $B \cap U$. This coloring extends $c$ and therefore the initial pre-coloring of the sinks, and is a majority coloring: By the first of the two conditions, every vertex $x \in V(D) \setminus U$ has at least $\frac{d^+(x)}{2}$ out-neighbors with a different color. For each vertex $x \in U$, since $\{A,B\}$ is a bipartition of $D[X]$, all out-neighbors in $U$ have a different color, and among the out-neighbors in $D-X$, at most $\max\{d(c_{U},1,x),d(c_{U},2,x)\} < \frac{1}{2}d^+(x)$ can share its color. Therefore every vertex satifies the condition for a majority-coloring, and this concludes the proof of the claim.
\end{proof}

\begin{theorem}\label{theorem:2choosability}
Let $D$ be a digraph and for each $v \in V(D)$ let $L(v)$ be an assigned list of two colors. Suppose that there exists no odd directed cycle in $D$ all whose vertices are assigned the same list. Then there is a majority-coloring $c$ of $D$ such that $c(v) \in L(v)$ for all $v \in V(D)$.
\end{theorem}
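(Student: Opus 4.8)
The plan is to reduce the list version to the non-list Proposition~\ref{proposition:nooddcycles} by a suitable quotient/partition argument, exploiting the fact that there are only finitely many relevant lists. Partition $V(D)$ according to the assigned list: for each $2$-element list $\ell$ appearing in the assignment, let $V_\ell = \{v : L(v) = \ell\}$. The hypothesis says that for each $\ell$, the induced subdigraph $D[V_\ell]$ contains no odd directed cycle, so by Lemma~\ref{lemma:strongcompbipartite} all strong components of $D[V_\ell]$ are bipartite. The idea is to build a single auxiliary digraph on which Proposition~\ref{proposition:nooddcycles} applies, by contracting/identifying the two colors of each list to the common palette $\{1,2\}$, but doing so consistently requires care because two vertices with lists $\{1,2\}$ and $\{2,3\}$ "agree" only on color $2$.

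First I would set up the right auxiliary object. For each class $V_\ell$ with $\ell=\{a,b\}$, fix a bipartition $(A_\ell,B_\ell)$ of each strong component of $D[V_\ell]$ (combining these over the components gives a bipartition of each component, though not necessarily of all of $D[V_\ell]$, which is exactly the amount Proposition~\ref{proposition:nooddcycles} used). Then I would mimic the proof of Proposition~\ref{proposition:nooddcycles} directly rather than invoke it as a black box: process the strong components of $D$ in reverse topological order, maintaining a partial majority coloring. When we reach a "source" strong component $S$ (no arcs entering it from uncolored vertices), note that $S$ lies entirely inside one class $V_\ell$ since a directed cycle stays within one class and $S$ is strongly connected — wait, that is the key structural point: every strong component of $D$ is monochromatic with respect to the list assignment, because if $S$ contained vertices $u,v$ with $L(u)\neq L(v)$ then the directed cycle through $u$ and $v$ inside $S$ would contradict nothing by itself, but actually $S$ being strongly connected and containing a vertex of each of two list-classes forces a directed cycle meeting both classes, which is fine — so instead I use: $D[S]$ has no odd directed cycle at all. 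Indeed any odd directed cycle in $D[S]$ would be an odd directed cycle in $D$ whose vertices need not share a list, so the hypothesis does not immediately forbid it.

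So the genuine content, and the main obstacle, is to show each strong component $S$ of $D$ is bipartite (equivalently has no odd directed cycle) despite the hypothesis only forbidding odd directed cycles that are list-monochromatic. This is in fact false in general, so the argument cannot go through the strong-component decomposition of $D$ naively; instead one must run the inductive argument on a finer object. The fix I would pursue: consider the digraph $D'$ obtained from $D$ by deleting, for the moment, all arcs between vertices of different list-classes, apply Proposition~\ref{proposition:nooddcycles} within each class to get a base coloring $c$ of that class extending prescribed sink-colors, and then re-insert the cross-class arcs and correct, using an augmenting/cleanup procedure identical in spirit to the "peeling set $U$" argument in Proposition~\ref{proposition:nooddcycles}: a vertex violating the majority condition because of recolored cross-class out-neighbors gets added to a set $U$, removed, and at the end the vertices of $U$ are recolored using the bipartition structure of their own list-class. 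The hard part will be checking that after peeling, recoloring each $x\in U$ by its side of the bipartition $(A_\ell,B_\ell)$ of its class simultaneously fixes all of them — this works because, within a class, adjacent-in-a-strong-component vertices lie on opposite sides, so the within-class out-neighbors of $x$ are correctly bichromatic, while the bounded number of out-of-class out-neighbors was exactly what the peeling procedure controlled, yielding strictly fewer than $d^+(x)/2$ of the color $c_U(x)$. Finally I would remark that the sink pre-coloring extension is needed to make the induction on strong components (now of the cross-arc digraph, processed in topological order) close, exactly as in Proposition~\ref{proposition:nooddcycles}.
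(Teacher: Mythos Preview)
Your proposal does not close. You correctly identify the obstacle---strong components of $D$ need not be list-monochromatic, so Proposition~\ref{proposition:nooddcycles} cannot be applied to $D$ directly---but your proposed fix (color each list-class independently, then repair violations caused by cross-class arcs via a global peeling procedure) has a real gap. In the proof of Proposition~\ref{proposition:nooddcycles} the set $U$ lies inside a single bipartite source component $X$, so when the vertices of $U$ are finally recolored according to the bipartition of $X$, every out-neighbor of $x\in U$ that also lies in $U$ automatically receives the opposite color. In your setup $U$ may contain vertices from several list-classes, and a vertex $x\in U$ of class $\ell$ can have out-neighbors in $U$ belonging to a different class $\ell'$; the bipartition of $D[V_{\ell'}]$ has nothing to do with $x$'s color, so those out-neighbors may well end up the same color as $x$. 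Your sentence ``the within-class out-neighbors of $x$ are correctly bichromatic'' silently drops exactly these cross-class edges inside $U$, and nothing in the peeling invariant bounds them. You also never specify the ``prescribed sink-colors'' in step one, which is where the actual work hides.

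The paper's argument avoids any repair step by a trick you did not find. First reduce (harmlessly, by deleting such edges) to the case where adjacent vertices have intersecting lists. Then for a vertex $x$ with $L(x)=\{a,b\}$ and a cross-class out-neighbor $y$, the intersection $L(y)\cap\{a,b\}$ is a \emph{single} color $p(y)$. Form $D'_{\{a,b\}}$ from $D[X_{\{a,b\}}]$ by retaining all outgoing cross-class arcs, with their heads as sinks pre-colored by $p(\cdot)$; only sinks were added, so $D'_{\{a,b\}}$ still has no odd directed cycle, and Proposition~\ref{proposition:nooddcycles} yields a majority coloring $c_{\{a,b\}}$ using colors $a,b$ and respecting this pre-coloring. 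Patching the class-wise colorings together already gives a majority coloring of $D$: for a cross-class out-neighbor $y$ of $x$, the eventual color $c(y)\in L(y)$ can equal $c(x)\in\{a,b\}$ only if $c(y)=p(y)=c_{\{a,b\}}(y)$, so the conflicting out-neighbors of $x$ under $c$ are a subset of those under $c_{\{a,b\}}$. No peeling, no correction.
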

\begin{proof}
We may assume w.l.o.g. that color lists of adjacent vertices always intersect: Otherwise, we remove all edges between vertices with disjoint color lists to obtain a digraph $D'$. Any majority-coloring of $D'$ with colors chosen from the lists will also be a majority-coloring of $D$. 

Now consider an arbitrary pair $\{a,b\}$ of colors and let $X_{\{a,b\}}:=\{x \in V(D)|L(x)=\{a,b\}\}$. By assumption $D[X_{\{a,b\}}]$ contains no odd directed cycles. Let $D'_{\{a,b\}}$ be the digraph obtained from $D[X_{\{a,b\}}]$ by adding all arcs $(x,y) \in E(D)$ with $x \in X_{\{a,b\}}$ and $y \notin X_{\{a,b\}}$ and their endpoints. Since we only add sinks to $D[X_{\{a,b\}}]$, also $D'_{\{a,b\}}$ contains no odd directed cycles. For each vertex $y \in N^+(X_{\{a,b\}}) \setminus X_{\{a,b\}}$, there is a unique color $p_{\{a,b\}}(y)$ in $L(y) \cap \{a,b\}$. Pre-color the sinks of $D'_{\{a,b\}}$ in such a way that every vertex $y \in N^+(X_{\{a,b\}}) \setminus X_{\{a,b\}}$ receives color $p_{\{a,b\}}(y)$. By Proposition~\ref{proposition:nooddcycles} we can now find a majority-coloring $c_{\{a,b\}}$ of $D'_{\{a,b\}}$ extending this pre-coloring with colors $a$ and $b$. 

Now define a coloring $c$ of all vertices in $D$ by setting $c(x):=c_{\{a,b\}}(x)$ if $L(x)=\{a,b\}$. Clearly, we have $c(x) \in L(x)$ for all $x \in V(D)$. We claim that $c$ is a majority-coloring of $D$. Indeed, for any vertex $x \in V(D)$, if $L(x)=\{a,b\}$, then we have $N^+(x)=N_{D'_{\{a,b\}}}^+(x)$, and $\{y \in N^+(x) \mid c(y)=c(x)\} \subseteq \{y \in N_{D'_{\{a,b\}}}^+(x) \mid c_{\{a,b\}}(x)=c_{\{a,b\}}(y)\}$. Hence, at most half of the out-neighbors of $x$ share its color, and the claim follows.
\end{proof}
Theorem~\ref{corfor2choosability} is now obtained from Theorem~\ref{theorem:2choosability} as a direct consequence.
\section{Majority $3$-Colorings of Sparse Digraphs} \label{sec:sparsedigraphs}
As a consequence of Theorem~\ref{theorem:2choosability}, we obtain our main result:
\begin{theorem}\label{theorem:3oddpartition}
Let $D$ be a digraph. Suppose there is a partition $\{X_1,X_2,X_3\}$ of the vertex set such that for every $i \in \{1,2,3\}$, $D[X_i]$ contains no odd directed cycles. Then $D$ is majority $3$-colorable. 
\end{theorem}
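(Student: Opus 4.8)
The plan is to deduce this directly from Theorem~\ref{theorem:2choosability} by choosing the right list assignment. Since we are allowed three colors $\{1,2,3\}$ and we have a partition $\{X_1,X_2,X_3\}$, the natural move is to forbid color $i$ on the part $X_i$: for each $v \in X_i$ set $L(v) := \{1,2,3\} \setminus \{i\}$. Each list has size exactly two, so this is a valid input for Theorem~\ref{theorem:2choosability}, and any coloring chosen from these lists uses only the three colors $1,2,3$.

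Next I would check the hypothesis of Theorem~\ref{theorem:2choosability}, namely that no odd directed cycle has all of its vertices assigned the same list. The three lists $\{2,3\}$, $\{1,3\}$, $\{1,2\}$ are pairwise distinct, so the set of vertices carrying a fixed list $\{1,2,3\}\setminus\{i\}$ is exactly $X_i$. Hence a directed cycle all of whose vertices share a common list is entirely contained in some $D[X_i]$. By the assumption of the theorem, $D[X_i]$ contains no odd directed cycles, so no such monochromatic-list odd directed cycle exists. Therefore Theorem~\ref{theorem:2choosability} applies and yields a majority coloring $c$ of $D$ with $c(v) \in L(v)$ for all $v$; in particular $c$ uses at most three colors, and the proof is complete.

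There is essentially no technical obstacle here — the content of the argument is entirely pushed into Theorem~\ref{theorem:2choosability} (and thus into Proposition~\ref{proposition:nooddcycles}). The only ``insight'' is the choice of list assignment that turns a partition into acyclic-list pieces into a list assignment with no monochromatic-list odd directed cycle; once one realizes that the part $X_i$ should be the set of vertices forbidden to use color $i$, everything falls out immediately. If anything needs care, it is just the bookkeeping observation that distinct parts get distinct $2$-element lists, so that ``same list'' is equivalent to ``same part''; this is where we use that we have exactly three parts and three colors.
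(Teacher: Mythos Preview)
Your proposal is correct and follows exactly the same approach as the paper: assign the list $\{1,2,3\}\setminus\{i\}$ to every vertex of $X_i$, observe that vertices with a common list lie in a single $X_i$ and hence span no odd directed cycle, and apply Theorem~\ref{theorem:2choosability}. There is nothing to add.
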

\begin{proof}
We assign lists of size two to the vertices of $D$, namely, we assign the list $\{2,3\}$ to all vertices in $X_1$, the list $\{1,3\}$ to all vertices in $X_2$, and the list $\{1,2\}$ to all vertices in $X_3$. Because $D[X_i], i=1,2,3$ contains no odd directed cycle, we can apply Theorem~\ref{theorem:2choosability} to conclude that there exists a majority-coloring of $D$ which uses only colors $1,2$ and $3$. This proves the claim.
\end{proof}
From this we now directly derive Theorem~\ref{chrom6} and~\ref{dichrom3}.
\begin{proof}[Proof of Theorem~\ref{chrom6}.]
If $\chi(D) \leq 6$, then $D$ admits a partition $Y_1,\ldots,Y_6$ into independent sets. Using the partition $\{Y_1 \cup Y_2,Y_3 \cup Y_4,Y_5 \cup Y_6\}$ of the vertex set to apply Theorem~\ref{theorem:3oddpartition} now shows that $D$ is indeed majority $3$-colorable.
\end{proof}

\begin{proof}[Proof of Theorem~\ref{dichrom3}.]
If $\vec{\chi}(D) \leq 3$, then there exists a partition $\{X_1,X_2,X_3\}$ of the vertex set such that $D[X_i]$ contains no directed cycles, for $i=1,2,3$. The claim now follows by Theorem~\ref{theorem:3oddpartition}.
\end{proof}

The fact that Theorem~\ref{theorem:2choosability} deals with an assignment of lists can be further exploited to show analogues of Theorem~\ref{theorem:3oddpartition}, Theorems~\ref{chrom6} and \ref{dichrom3} for list colorings. 

For this purpose we need the following notion: Call a digraph $D$ \emph{OD-$3$-choosable} if for any assignment of color lists $L(x)$, $x \in V(D)$ of size $3$ to the vertices, there exists a choice function $c$ (i.e. $c(x) \in L(x)$ for all $x \in V(D)$) such that no odd directed cycle in $(D,c)$ is monochromatic. 

\begin{theorem} \label{theorem:odd3listpartition}
Let $D$ be a digraph. If $D$ is OD-$3$-choosable, then $D$ is majority $3$-choosable.
\end{theorem}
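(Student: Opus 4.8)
The plan is to reduce to Theorem~\ref{theorem:2choosability} by using OD-$3$-choosability to pick, for each vertex, a $2$-element sublist of its colour list. So suppose $D$ is OD-$3$-choosable; to prove it is majority $3$-choosable, fix a list assignment with $|L(v)|\ge 3$ for all $v$. First I would shrink every list to size exactly $3$: this loses no generality, since a majority coloring with colours chosen from the smaller lists is still one for the original lists. Hence assume $|L(v)|=3$ for every $v\in V(D)$.

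Next I would pass to the ``meta-palette'' of $2$-element colour sets. For each vertex $v$, let $\mathcal{L}(v)$ be the family of all $2$-element subsets of $L(v)$; since $|L(v)|=3$ we have $|\mathcal{L}(v)|=\binom{3}{2}=3$, so $\mathcal{L}$ is a legitimate size-$3$ list assignment, the underlying set of ``colours'' now being the set of all $2$-element sets of colours (identified with $\mathbb{N}$ in the obvious way). Applying the hypothesis that $D$ is OD-$3$-choosable to the list assignment $\mathcal{L}$ yields a choice function $\phi$ with $\phi(v)\in\mathcal{L}(v)$ for all $v$ — that is, $\phi(v)$ is a $2$-element subset of $L(v)$ — such that no odd directed cycle of $D$ is monochromatic under $\phi$. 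Unwinding ``monochromatic'' for this palette: there is no odd directed cycle all of whose vertices are assigned one and the same $2$-element set by $\phi$.

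Finally I would set $L'(v):=\phi(v)$, obtaining a list assignment of size exactly $2$ with $L'(v)\subseteq L(v)$ and with the property that no odd directed cycle of $D$ has all of its vertices assigned the same list $L'$. This is precisely the hypothesis of Theorem~\ref{theorem:2choosability}, which then provides a majority coloring $c$ of $D$ with $c(v)\in L'(v)\subseteq L(v)$ for every $v$. This is the desired majority coloring from the lists $L$, so $D$ is majority $3$-choosable.

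The argument is essentially a bookkeeping reduction, so I do not expect a genuinely hard step; the one point that needs care is the translation between the two notions of ``monochromatic'': one must check that ``same colour under $\phi$'' for the meta-palette of $2$-element sets coincides exactly with ``all vertices are assigned the same $2$-element list'', i.e.\ with the monochromatic-list condition in the hypothesis of Theorem~\ref{theorem:2choosability}, and that replacing the given lists by lists of size exactly $3$ (so as to match the definition of OD-$3$-choosability and to form $\mathcal{L}$) is harmless.
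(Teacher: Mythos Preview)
Your proof is correct and follows essentially the same approach as the paper: both pass to the ``meta-palette'' of $2$-element subsets of each list (the paper calls this $L^\ast(v)$), apply OD-$3$-choosability to this new assignment to select size-$2$ sublists avoiding monochromatic odd directed cycles, and then invoke Theorem~\ref{theorem:2choosability}. The only cosmetic difference is your explicit preliminary step of shrinking lists to size exactly $3$, which the paper leaves implicit.
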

\begin{proof}
Let $L(v)$ for all $v \in V(D)$ be a given color list of size three. We have to show that there is a majority-coloring $c$ of $D$ such that $c(v) \in L(v)$ for all $v \in V(D)$. For every $v \in V(D)$, we let $L^\ast(v):=\{\{C_1,C_2\}|C_1 \neq C_2 \in L(v)\}$ contain all three unordered color-pairs in $L(v)$. Since $D$ is OD-$3$-choosable, there exists a choice function $c^\ast$ on $V(D)$ such that $c^\ast(v) \in L^\ast(v)$ for each vertex $v \in V(D)$ is a subset of $L(v)$ of size two and such that there exists no odd directed cycle in $D$ which is monochromatic with respect to $c^\ast$. If we now consider $c^\ast(v), v \in V(D)$ as an assignment of lists of size two to the vertices of $D$, we can apply Theorem~\ref{theorem:2choosability} to conclude that there is a majority-coloring $c$ of $D$ such that $c(v) \in c^\ast(v) \subseteq L(v)$ for every vertex $v \in V(D)$. As $L(\cdot)$ was arbitrary, we conclude that $D$ is majority $3$-choosable.
\end{proof}

We are now ready to prove Theorem~\ref{lchrom6} and Theorem~\ref{ldichrom3}.
\begin{proof}[Proof of Theorem~\ref{lchrom6}.]
We show that $D$ is OD-$3$-choosable, the claim then follows by Theorem~\ref{theorem:odd3listpartition}. Let $L(v)$ for each vertex $v \in V(D)$  be an assigned list of three colors. For each color $C$ used in one of the lists, let $C'$ be a distinct copy of this color. We now consider the assignment $L_6(\cdot)$ of lists of size $6$ to the vertices of $D$, where for each vertex $v \in V(D)$, $L_6(v):=\{C_1,C_1',C_2,C_2',C_3,C_3'\}$ if $C_1, C_2, C_3$ denote the colors contained in $L(v)$. Because the underlying graph of $D$ is $6$-choosable, there is a proper coloring $c_6$ of $D$ such that $c_6(v) \in L_6(v)$ for all $v \in V(D)$. Now consider the coloring $c$ of $D$ obtained from $c_6$ by identifying each copy $C'$ of an original color $C$ with $C$ again. We then have $c(v) \in L(v)$ for every $v \in V(D)$. Since $c_6$ was a proper coloring of the undirected underlying graph of $D$, each color class with respect to $c$ induces a bipartite subdigraph of $D$, and hence there are no monochromatic odd directed cycles in $(D,c)$. Hence, $D$ is OD-$3$-choosable.
\end{proof}
\begin{proof}[Proof of Theorem~\ref{ldichrom3}.]
This follows directly since any digraph with $\vec{\chi}_\ell(D) \leq 3$ is clearly OD-$3$-choosable.
\end{proof}

The rest of this section is devoted to proving Theorem~\ref{maxdegree}. We prepare it with the following Lemma, whose proof makes use of Theorems~\ref{lchrom6} and~\ref{ldichrom3}.
\begin{lemma}\label{outdegree3}
Let $D$ be a digraph such that $\min\{d^+(x),d^-(x)+1\} \leq 3$ for every $x \in V(D)$. Then $D$ is OD-$3$-choosable.
\end{lemma}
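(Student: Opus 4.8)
The goal is to show that any digraph $D$ with $\min\{d^+(x), d^-(x)+1\} \le 3$ at every vertex is OD-$3$-choosable. Partition $V(D)$ into $S := \{x : d^+(x) \le 3\}$ (the "small out-degree" vertices) and $T := \{x : d^+(x) \ge 4\}$; by hypothesis every vertex of $T$ satisfies $d^-(x) \le 2$. The plan is to reduce OD-$3$-choosability of $D$ to the two list-coloring results already proved, using the fact that vertices in $T$ have small \emph{in}-degree, so they cannot lie on many short problematic structures, while vertices in $S$ have small \emph{out}-degree.

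First I would observe that for the OD-$3$-choosability condition, only monochromatic odd directed cycles matter, and every vertex on a directed cycle contributes both an in-arc and an out-arc on that cycle. So I would like to handle the cycles through $T$-vertices separately. Here is the concrete approach: take a given list assignment $L$ of size $3$. Consider the subdigraph $D[S]$; since every $x \in S$ has $d^+_{D[S]}(x) \le d^+_D(x) \le 3$, the underlying graph of $D[S]$ is $3$-degenerate when we orient-forget carefully — actually more simply, I would want to invoke Theorem~\ref{lchrom6}: I need the underlying graph of the relevant part to be $6$-choosable, or I would instead invoke Theorem~\ref{ldichrom3} via a bound on the list dichromatic number. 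The cleanest route: show $\vec{\chi}_\ell(D) \le 3$ restricted to $S$, or handle $S$ and $T$ together by a degeneracy argument. Since each $x\in S$ has out-degree $\le 3$ and each $x \in T$ has in-degree $\le 2$, I would build an ordering $v_1, \dots, v_n$ of $V(D)$ that processes all of $T$ last: order $T$ arbitrarily at the end, and order $S$ arbitrarily first. When greedily coloring in \emph{reverse} order (coloring $v_n$ first), each $v_i \in S$ still has at most $3$ already-discussed out-neighbors... — this needs care, so let me instead argue the two cleanly separate pieces.

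The main idea I would push: apply Theorem~\ref{ldichrom3} to a suitably modified digraph. Since a monochromatic odd directed cycle must use arcs, and vertices in $T$ have $d^-(x)\le 2$, I claim that after deleting from $D$ a small acyclic (or bipartite-inducing) set of arcs, what remains has list dichromatic number $\le 3$ or has $6$-choosable underlying graph. Concretely: the underlying graph of $D$ has every vertex either with $d^+ \le 3$ or $d^- \le 2$, hence total degree... no, that is false in general (a $T$-vertex can have huge out-degree). So the underlying graph need not be $6$-choosable. Therefore I must exploit direction genuinely. The right statement to prove is $\vec{\chi}_\ell(D) \le 3$: I would show that in any list assignment of size $3$, one can color so no monochromatic directed cycle exists, by the standard kernel/degeneracy method for digraphs — order vertices so that each vertex has few out-neighbors \emph{or} few in-neighbors earlier, and then a directed cycle would force a vertex with many earlier out-neighbors and many earlier in-neighbors simultaneously, which the $\min\{d^+, d^-+1\}\le 3$ condition precludes once we show any directed cycle contains a vertex that is "latest" on it.

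The key step, and the main obstacle, is proving $\vec{\chi}_\ell(D)\le 3$ (equivalently, direct OD-$3$-choosability). The natural tool is: any digraph in which every directed cycle contains a vertex of out-degree at most $2$ \emph{within that cycle's vertex set} can be list-colored with lists of size $3$ to avoid monochromatic directed cycles, by iteratively removing such vertices — but a $T$-vertex need not have out-degree $\le 2$ inside a cycle. The salvage is that a $T$-vertex has \emph{in-degree} $\le 2$ globally, so on any directed cycle through it, it has in-cycle in-degree $\le 2$; by reversing all arcs (which preserves directed cycles and the no-monochromatic-odd-cycle property), every vertex has out-degree $\le 3$ or in-degree $\le 2$ again with roles swapped, hence in the reversed digraph $D^{\mathrm{rev}}$ every vertex of the old $T$ has out-degree $\le 2$ and every vertex of the old $S$ has in-degree... still possibly large. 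So neither $D$ nor $D^{\mathrm{rev}}$ alone is covered; the honest fix is a direct degeneracy argument on the mixed condition: process vertices of $S$ first (small out-degree) and $T$ last (small in-degree), color $S$ greedily avoiding creating a monochromatic out-path toward the few out-neighbors, then color $T$ greedily avoiding a monochromatic in-path from the few in-neighbors; a monochromatic directed cycle would have to be entirely inside $S$ (excluded because $D[S]$ has $6$-choosable underlying graph via $3$-degeneracy, so Theorem~\ref{lchrom6} applies to $D[S]$) or pass through $T$, where the last $T$-vertex on the cycle has in-cycle in-degree $\le 2$ and we chose its color to differ from at least one of those $\le 2$ in-neighbors' colors, breaking monochromaticity. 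Formalizing this last greedy step so the color choices don't conflict across $T$ is where the real work lies, and I expect that to be the crux of the proof.
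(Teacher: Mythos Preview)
Your greedy treatment of the set $T=\{x:d^+(x)\ge 4\}$ is essentially right and matches the paper's idea: a vertex with $d^-(x)\le 2$ can always be coloured last from a list of size three, avoiding the colours of its (already coloured) in-neighbours, so no monochromatic directed cycle can enter it. The paper phrases this as removing such a vertex in a minimal counterexample and colouring it at the end.

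The gap is in your handling of $D[S]$. The claim that $U(D[S])$ is $3$-degenerate is false: knowing only $d^+_{D[S]}(x)\le 3$ places no bound on $d^-_{D[S]}(x)$, so a vertex of $S$ can have arbitrarily large underlying degree. The arc count $|E(D[S'])|\le 3|S'|$ for every $S'\subseteq S$ gives only $6$-degeneracy of $U(D[S])$, hence $7$-choosability, not $6$. Worse, $U(D[S])$ need not be $6$-choosable at all: already when $D$ is a $3$-regular tournament on seven vertices you have $S=V(D)$, $T=\emptyset$, and $U(D[S])=K_7$. So Theorem~\ref{lchrom6} cannot be invoked for $D[S]$, and your reduction stalls precisely at the case that carries the real content of the lemma.

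The paper repairs this by peeling off low-in-degree vertices \emph{iteratively} rather than in one shot: in a minimal counterexample, once every vertex has $d^-(x)\ge 3$, the hypothesis forces $d^+(x)\le 3$ for all $x$, and then $\sum d^- = \sum d^+$ pins the digraph down as exactly $3$-in- and $3$-out-regular. Only now does $U(D)$ have maximum degree $\le 6$, so the list version of Brooks' theorem applies, and the single exceptional case $U(D)=K_7$ (a $7$-vertex tournament) is dispatched separately via known bounds on the list dichromatic number of small tournaments. Your one-shot partition misses the double-counting step that forces $3$-regularity, and with it the leverage needed to invoke Brooks.
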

\begin{proof}
Suppose the claim was false and consider a counterexample $D$ minimizing $|V(D)|+|E(D)|$. We have $|V(D)| \ge 4$, $D$ is connected and every proper subdigraph of $D$ must be OD-$3$-choosable.

We first consider the case that there is a vertex $v$ with $d^-(v) \leq 2$. Since $D-v$ is OD-$3$-choosable, given any assignment $L(v), v \in V(D)$ of lists of size at least $3$ to the vertices, we can choose colors $c(w) \in L(w)$ from the lists for every $w \in V(D) \setminus \{v\}$ such that in $D-v$, there exists no monochromatic odd directed cycle. Now assign to $v$ a color $c(v) \in L(v) \setminus \{c(w) \mid w \in N^-(v)\}$. We claim that $c$ is a coloring of $D$ without monochromatic odd directed cycles. In fact, such a cycle would have to pass $v$, however no edge entering $v$ is monochromatic. Therefore $D$ is OD-$3$-choosable, a contradiction.

Hence we know for every $x \in V(D)$ that $d^-(x) \ge 3$. Since $\min\{d^+(x),d^-(x)+1\} \leq 3$, we also must have $d^+(x) \le 3$. We conclude \[3|V(D)| \le \sum_{v \in V(D)}{d^-(x)}=\sum_{v \in V(D)}{d^+(x)} \le 3|V(D)|\] and thus we have $d^+(x)=d^-(x)=3$ for all $x \in V(D)$. Consequently, the underlying simple graph $U(D)$ has maximum degree $\Delta(U(D)) \leq 6$. If $U(D)$ is $6$-choosable, then it follows as in the proof of Theorem~\ref{lchrom6} that $D$ is OD-$3$-choosable, a contradiction. 

Therefore, by the list coloring version of Brook's Theorem \cite{listbrooks}, we must have $U(D)=K_7$. Since $D$ is $3$-out- and $3$-in-regular, it follows that $D$ is a tournament on $7$ vertices. 
However, every tournament on $7$ vertices has list dichromatic number at most $3$ and is therefore OD-$3$-choosable according to Theorem~\ref{ldichrom3}. This can be seen using two results from \cite{listdichrom}. Clearly, we have $\vec{\chi}(D) \le 3$. Now if $\vec{\chi}(D)=3$, then we have $|V(D)|=7 \leq 2\vec{\chi}(D)+1$ and by Theorem 2.2 in \cite{listdichrom}, we conclude that $\vec{\chi}_\ell(D)=\vec{\chi}(D)=3$. Otherwise, we have $\vec{\chi}(D) \leq 2$. In this case, we can apply Theorem 3.3 in \cite{listdichrom} to conclude $\vec{\chi}_\ell(D) \leq 2\ln(7) < 4$. Therefore we have $\vec{\chi}_\ell(D) \le 3$ in each case.

Finally, since we obtained that $D$ is OD-$3$-choosable in each case, the initial assumption was wrong, which concludes the proof by contradiction.
\end{proof}
\begin{corollary}\label{aux}
Let $D$ be a digraph with $\min\{d^+(x),d^-(x)+2\} \leq 4$ for every $x \in V(D)$. Then $D$ is majority $3$-choosable.
\end{corollary}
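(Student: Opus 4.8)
The plan is to reduce to Lemma~\ref{outdegree3} by an arc-deletion argument, using the observation from the introduction that adding an arc $(u,v)$ whose tail currently has \emph{odd} out-degree transforms any majority coloring into a majority coloring of the enlarged digraph. (In particular this re-proves, and slightly generalizes, the case $\Delta^+(D)\le 4$ of Theorem~\ref{maxdegree}.)

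First I would build $D_0$ from $D$ by deleting, for each vertex $x$ with $d^+_D(x)=4$, one arbitrarily chosen out-arc of $x$. I claim $D_0$ satisfies the hypothesis of Lemma~\ref{outdegree3}, namely $\min\{d^+(x),d^-(x)+1\}\le 3$ for every vertex $x$. Deletions can only decrease the out-degree of vertices originally of out-degree $4$ (down to $3$) and can only decrease in-degrees of other vertices. If $d^+_D(x)\le 3$ then $d^+_{D_0}(x)\le 3$; if $d^+_D(x)=4$ then $d^+_{D_0}(x)=3$; and if $d^+_D(x)\ge 5$ then the corollary's hypothesis $\min\{d^+_D(x),d^-_D(x)+2\}\le 4$ forces $d^-_D(x)\le 2$, whence $d^-_{D_0}(x)\le 2$ and $d^-_{D_0}(x)+1\le 3$. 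So in all cases the inequality holds, Lemma~\ref{outdegree3} applies, and $D_0$ is OD-$3$-choosable; by Theorem~\ref{theorem:odd3listpartition} it is then majority $3$-choosable.

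Second, I would transfer this back to $D$. Let $L$ be any assignment of lists of size at least $3$ to $V(D)=V(D_0)$, and pick a majority coloring $c$ of $D_0$ with $c(v)\in L(v)$ for all $v$. Now re-insert the deleted arcs one by one. Each deleted arc has its own distinct tail (one deleted arc per vertex of out-degree $4$ in $D$), so whenever an arc with tail $x$ is re-inserted, the out-degree of $x$ in the current digraph equals $d^+_{D_0}(x)=3$, which is odd; hence by the introduction's observation $c$ stays a majority coloring. After restoring every arc we obtain $D$, with $c$ a majority coloring using colors from $L$. As $L$ was arbitrary, $D$ is majority $3$-choosable.

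The whole argument is degree bookkeeping built on top of Lemma~\ref{outdegree3}; the one step that genuinely uses the corollary's hypothesis (rather than just $\Delta^+\le 4$) is the verification that $D_0$ meets the Lemma's condition at vertices of out-degree at least $5$, where deletions help only on the in-degree side and one must appeal to the assumed bound $d^-(x)\le 2$. The re-insertion phase is clean precisely because each tail loses a single arc, so it has odd out-degree $3$ at every re-insertion, making the order immaterial.
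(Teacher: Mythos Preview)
Your proof is correct and follows essentially the same approach as the paper: reduce vertices of out-degree $4$ to out-degree $3$ by deleting one out-arc each, invoke Lemma~\ref{outdegree3} (and then Theorem~\ref{theorem:odd3listpartition}) on the resulting digraph, and transfer the majority coloring back via the odd-out-degree observation. The only difference is packaging: the paper phrases the edge-deletion step as a minimal-counterexample induction removing one arc at a time, whereas you perform all deletions at once and then re-insert, which is if anything a bit cleaner.
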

\begin{proof}
For a proof by contradiction, suppose the claim was false and consider a counterexample $D$ minimizing the number of edges. 

Consider first the case that there is a $v \in V(D)$ with $d^+(v)=4$. 
Let $e$ be an edge leaving $v$ and put $D':= D-e$. By the minimality of $D$, $D'$ is majority $3$-choosable. We now claim that any majority-coloring of $D'$ also defines a majority-coloring of $D$. Clearly, such a coloring satisfies the condition for a majority-coloring at any vertex distinct from $v$. Since $v$ has out-degree $3$ in $D'$, it has at most one out-neighbor in $D'$ of the same color. Thus there are at most two out-neighbors of $v$ in $D$ which share its color, and so the majority condition is fulfilled at $v$. We conclude that also $D$ must be majority $3$-choosable, which gives the desired contradiction.

Now for the second case, assume that no vertex has out-degree $4$. This means that for every $x \in V(D)$, we either have $d^+(x) \leq 3$ or $d^+(x) \ge 5$ and therefore $d^-(x) \leq 2$. We can therefore apply Lemma~\ref{outdegree3} to $D$, which shows that $D$ is OD-$3$-choosable. From Theorem~\ref{theorem:odd3listpartition} we get that $D$ is majority $3$-choosable. This again is a contradiction to $D$ being a counterexample to the claim. 

Therefore the initial assumption was wrong, and this concludes the proof.
\end{proof}

\begin{proof}[Proof of Theorem~\ref{maxdegree}.]
If $\Delta^+(D) \leq 4$ or $\Delta(D) \leq 7$, then the claim follows by applying Corollary~\ref{aux}. If $\Delta(U(D)) \leq 6$, then by the list coloring version of Brook's Theorem either $U(D)$ is $6$-choosable, and then the claim follows from Theorem~\ref{lchrom6}, or $U(D)=K_7$.

Now let $L(v_1), \ldots, L(v_7)$ be lists of size three assigned to the vertices $\{v_1,\ldots,v_7\}$ of $D$. 
We first consider the case that all lists are equal, i.e., show that $D$ is majority $3$-colorable.

If there exists a vertex $v \in V(D)$ which is contained in at most $3$ digons, then there are vertices $u_1 \neq u_2 \in V(D) \setminus \{v\}$ such that $u_1,u_2,v$ do not form a directed triangle. Therefore, any partition $\{X_1,X_2,X_3\}$ of $V(D)$ where $X_1=\{v,u_1,u_2\}$ and $|X_2|=|X_3|$ shows, by Theorem~\ref{theorem:3oddpartition}, that $D$ is majority $3$-colorable. 
Otherwise, every vertex in $D$ is contained in at least $4$ digons and thus has out-degree at least $4$. Now any $3$-coloring of $D$ with color classes of sizes $2,2,3$ defines a majority-coloring of $D$. 

Now suppose that not all lists are equal. In this case we can choose for each vertex $v_i$ a sublist $L_2(v_i) \subseteq L(v_i)$ of size two such that no three vertices are assigned the same sublist (minimize the number of edges whose ends are assigned the same sublist). By Theorem~\ref{theorem:2choosability} we obtain a majority-coloring $c$ of $D$ where $c(v_i)$ is contained in $L_2(v_i) \subseteq L(v_i)$.
Hence, $D$ is majority $3$-choosable in each case, which concludes the proof.
\end{proof}

\section{Fractional Majority Colorings}\label{sec:fractional}

Another concept introduced in \cite{kreutzer} is that of a fractional majority coloring. Given a subset $S\subseteq V(D)$, a vertex $v$ is {\it popular in }$S$ if $v\in S$ and more than half of its out-neighbors are in $S$. A subset $S\subseteq V(D)$ is {\it stable} if it contains no popular elements. Let $S(D)$ be the set of all stable sets of $D$, and $S(D,v)$ the set of all stable sets containing $v$. A {\it fractional majority coloring} is a function that assigns a weight $w_T\geq 0$ to every set $T\in S(D)$, satisfying $\sum_{T\in S(D,v)}w_T\geq 1$ for every $v\in V(D)$. The total weight of a fractional majority coloring is simply $\sum_{T\in S(D)}w_T$. Kreutzer et al.~asked for the minimum constant $K$ such that every digraph admits a fractional majority coloring with total weight at most $K$.

We will show two results related to this question, namely Theorem \ref{4-c} and Theorem \ref{fracldeg}. The proof of these two theorems will be based on the dual of the linear program defined by the restrictions on a fractional majority coloring:

\begin{observation}\label{duality} For a digraph $D$, the minimum possible total weight of a fractional majority coloring is also the maximum total weight $\sum_{v\in V(D)} w_v$ in a non-negative weight assignment of $V(D)$ in which every stable set $T$ satisfies $\sum_{v\in T}w_v\leq 1$.\end{observation}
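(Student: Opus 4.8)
The plan is to recognize the minimum-total-weight fractional majority coloring problem as a (finite-dimensional) linear program and to identify the quantity in the statement as the optimum of its LP dual; strong LP duality then yields the claimed equality. I would begin by writing the primal LP explicitly: variables $w_T\geq 0$ for $T\in S(D)$, objective $\min\sum_{T\in S(D)}w_T$, and one constraint $\sum_{T\in S(D,v)}w_T\geq 1$ per vertex $v\in V(D)$. Since $D$ is finite, $S(D)$ is a finite set, so this is a genuine finite-dimensional LP, whose constraint matrix is the vertex--stable-set incidence matrix $M$ with $M_{v,T}=1$ iff $v\in T$.

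Next I would write down the dual program: variables $w_v\geq 0$ for $v\in V(D)$, objective $\max\sum_{v\in V(D)}w_v$, and constraints $(M^\top w)_T=\sum_{v\in T}w_v\leq 1$ for every $T\in S(D)$. This is precisely the non-negative weight assignment of $V(D)$ described in the Observation, so it remains only to justify that primal minimum $=$ dual maximum.

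To apply strong duality I would check feasibility and boundedness of both programs. Every singleton $\{v\}$ is stable, since a vertex is never popular in $\{v\}$ (it has no out-neighbour inside $\{v\}$); hence assigning weight $1$ to each singleton is a feasible fractional majority coloring of total weight $|V(D)|$, and the primal objective is bounded below by $0$. (Alternatively, majority $4$-colorability of every digraph gives a feasible solution of total weight $4$, though this is not needed here.) For the dual, the all-zero assignment is feasible, and since singletons are stable the constraints force $w_v\leq 1$ for each $v$, so $\sum_{v}w_v\leq|V(D)|$; thus the dual objective is bounded above. With both programs feasible and bounded, the strong duality theorem for linear programs gives that the primal minimum and the dual maximum are equal (and both attained), which is exactly the assertion.

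There is no substantial obstacle here: the only points requiring care are confirming that enough stable sets exist (singletons) so that both programs are feasible and bounded, and noting the finiteness of $D$ so that we are in the standard finite-dimensional LP duality setting with attained optima. The rest is a direct invocation of LP strong duality.
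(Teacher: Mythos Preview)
Your proposal is correct and is exactly the argument the paper has in mind: the Observation is stated without proof, introduced simply as ``the dual of the linear program defined by the restrictions on a fractional majority coloring,'' and your write-up supplies precisely those LP-duality details (including the feasibility/boundedness check via stable singletons) that the paper leaves implicit.
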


The main idea of the proof of both theorems is that, given any choice of weights on $V(D)$, we can construct a stable set in which the weight is at least a given fraction of the total weight, using the probabilistic method.

\begin{lemma}\label{alterations} Let $D$ be a digraph and let $0<p<1$. Suppose that one can take a random subset $X\subseteq V(D)$ with the property that, for every $v\in V(D)$, the probability that $v$ is in $X$ but not popular in $X$ is at least $p$. Then $D$ admits a fractional majority coloring with total weight at most $\frac1p$.\end{lemma}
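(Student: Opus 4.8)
The plan is to use the probabilistic existence of a heavy stable set together with the LP-duality characterization in Observation~\ref{duality}. By that observation, it suffices to show the following: for every non-negative weight assignment $w\colon V(D)\to\RR_{\ge 0}$ in which every stable set $T$ satisfies $\sum_{v\in T}w_v\le 1$, we have $\sum_{v\in V(D)}w_v\le \frac1p$. So fix such a weighting $w$ and set $W:=\sum_{v\in V(D)}w_v$; the goal is $W\le\frac1p$.

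First I would take the random subset $X\subseteq V(D)$ guaranteed by the hypothesis, and then perform the standard deletion step: let $X'$ be obtained from $X$ by deleting every vertex that is popular in $X$. I would like $X'$ to be stable, but deleting popular vertices may create new popular vertices, so the cleanest route is not to insist $X'=X\setminus\{\text{popular in }X\}$ but rather to argue at the level of expected weight. Define the random variable $Z:=\sum_{v\in V(D)} w_v \cdot \mathbf{1}[v\in X,\ v\text{ not popular in }X]$. By linearity of expectation and the hypothesis, $\mathbb{E}[Z]=\sum_{v} w_v\cdot\Pr[v\in X,\ v\text{ not popular in }X]\ge p\sum_v w_v = pW$.

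The key step is to bound $Z$ from above by $1$ with probability one, which will give $pW\le\mathbb{E}[Z]\le 1$ and hence $W\le\frac1p$, finishing the proof. For this I would observe that the set $S:=\{v\in V(D): v\in X,\ v\text{ not popular in }X\}$ is always a stable set of $D$. Indeed, a vertex $v\in S$ lies in $X\supseteq S$ and has at most half of its out-neighbors in $X$, hence at most half in $S\subseteq X$, so $v$ is not popular in $S$; as this holds for every $v\in S$, the set $S$ contains no popular element and is stable. Therefore the dual-feasibility of $w$ gives $Z=\sum_{v\in S}w_v\le 1$ deterministically, which is exactly what we needed.

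I expect the only real subtlety — and the point worth stating carefully — is precisely this monotonicity observation: that restricting attention to the vertices of $X$ that are not popular in $X$ yields a genuinely stable set, using $S\subseteq X$ so that ``at most half the out-neighbors in $X$'' implies ``at most half the out-neighbors in $S$''. Everything else is the routine pairing of linearity of expectation with LP duality, so no further computation is required.
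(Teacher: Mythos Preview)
Your proposal is correct and follows essentially the same argument as the paper: both use Observation~\ref{duality}, consider the set $S=X\setminus\{\text{popular in }X\}$, observe via $S\subseteq X$ that no vertex of $S$ is popular in $S$ so $S$ is stable, and then compare the expected $w$-weight of $S$ (which is at least $pW$) to the dual constraint $\sum_{v\in S}w_v\le 1$. The only cosmetic difference is that the paper phrases it as a proof by contradiction and picks a single instance of $X\setminus Y$ with weight $>1$, whereas you bound $Z\le 1$ deterministically and compare with $\mathbb{E}[Z]$; your initial worry that ``deleting popular vertices may create new popular vertices'' is exactly the non-issue you resolve in your final paragraph.
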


\begin{proof}
Suppose that $D$ is a counterexample to our statement, and we will reach a contradiction. By Observation~\ref{duality}, we can assign weights to $V(D)$ so that the total weight is $w>\frac{1}{p}$, and every stable set in $D$ has a sum of weights at most one. Let $Y$ be the set of popular vertices in $X$. By linearity of expectation, the expected total weight of $X\setminus Y$ is at least $pw>1$. 

Take an instance of $X\setminus Y$ with weight greater than 1. Every vertex in $X\setminus Y$ has at least half of its out-neighbors outside of $X$, which implies that it is not popular in $X\setminus Y$. Hence $X\setminus Y$ is stable in $D$ and has total weight greater than 1, producing a contradiction. 
\end{proof}

The proof of Theorem~\ref{fracldeg} is a straightforward application of this lemma:

\begin{proof}[Proof of Theorem~\ref{fracldeg}] Let $N$ be a large enough positive integer. Let $D$ be a digraph with $\delta^+(D)\geq N$. Set $p=\frac12-\sqrt{\frac{\ln N}{N}}$. Let $X$ be a random subset of $V(D)$ in which every element is included independently with probability $p$. By Hoeffding's inequality, for any vertex $v$ the probability that at least half of its out-neighbors are in $X$ is at most \[\Pr\left(|X\cap N^+(v)|\geq\frac12d^+(v)\right)\leq e^{-2\left(\frac12-p\right)^2d^+(v)}\leq e^{-2\ln N}=N^{-2}\]

Setting $q=N^{-2}$, from Lemma~\ref{alterations} we find a fractional majority coloring of total weight at most $\frac{1}{p-q}=2+O\left(\sqrt{\frac{\ln N}{N}}\right)$. For $N \ge C(1/\varepsilon)^2\ln(2/\varepsilon)$, we have $\frac{1}{p-q}<2+\varepsilon$.\end{proof}

For Theorem~\ref{4-c}, we need to be more careful. Consider again the set $X$ containing each vertex independently with probability $p$, where $p$ is slightly lower than $\frac12$. If the out-degree of $v$ is not 1, one can show that the probability that $v$ is popular in $X$ is upper-bounded by a constant, strictly smaller than $p-\frac14$. However, if $v$ has out-degree 1, the probability that $v$ is popular in $X$ is $p^2>p-\frac14$. For this reason, the vertices with out-degree 1 deserve extra consideration.

Observe that, in the graph induced by the vertices of out-degree 1, all cycles are directed, pairwise disjoint and act as sinks. Consequently, removing one vertex from each directed cycle produces an acyclic graph, where the vertices can be given an ordering in which every edge goes from a larger vertex to a smaller one.

\begin{proof}[Proof of Theorem~\ref{4-c}]

Set $p_1=0.4594$ and $p_2=0.4503$. Assign independently to each vertex $v$ a random indicating variable $X_v$, which takes the value $1$ with probability $p_1$ if $d^+(v)=1$ and with probability $p_2$ otherwise. Now construct the random subset $X$ as follows:

\begin{itemize}
\item Add to $X$ all vertices $v$ with $d^+(v)\neq 1$ and $X_v=1$.
\item For every cycle $C$ formed by vertices with $d^+(v)=1$ and $X_v=1$, select a vertex $v\in C$ uniformly at random and set $X_v=0$. 
\item Take an ordering of the vertices $v$ with $d^+(v)=1$ and $X_v=1$, in which if we have an edge $(v,w)$ then $v$ comes after $w$ (this is possible because these vertices form an acyclic digraph). Following this order, add $v$ to $X$ if its out-neighbor is not in $X$.
\end{itemize}

We will show that, for every vertex $v$, the probability that $v$ is in $X$ but not popular in $X$ is at least $\frac14+\varepsilon$, for a fixed value of $\varepsilon>0$. Suppose first that $d^+(v)=1$. If we draw the vertices with out-degree 1 in red and those with other out-degrees in blue, then the successive out-neighborhoods of $v$ must have one of these forms:

\begin{figure}[h]
\begin{center}
\includegraphics[scale=1]{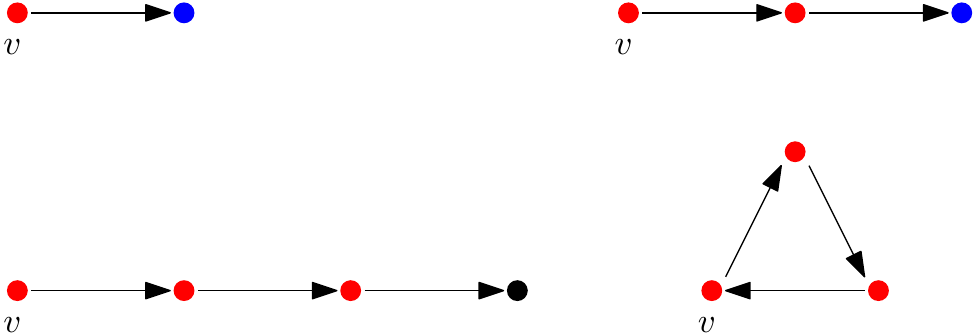}
\end{center}
\caption{The four possible out-neighborhoods of a red vertex. The black vertex here can be either red or blue.} \label{patterns}
\end{figure}

We label the cases as Case 1 through Case 4, left to right and top to bottom in Figure~\ref{patterns}. We denote $v=v_0$, and $v_{i+1}$ as the out-neighbor of $v_i$, if it is unique. We go through each case:

\begin{itemize}
\item If $v$ is in Case 1, then whenever $X_v=1$ and $X_{v_1}=0$ we have $v\in X$. This happens with probability $p_1(1-p_2)$.
\item If $v$ is in Case 2, then whenever $X_v=1$ and $X_{v_1}=0$, or whenever $X_v$, $X_{v_1}$ and $X_{v_2}$ all equal 1, we have $v\in X$. This happens with probability $p_1(1-p_1)+p_1^2p_2$.
\item If $v$ is in Case 3, then whenever $X_v=1$ and $X_{v_1}=0$, or whenever $X_v=1$, $X_{v_1}=1$, $X_{v_2}=1$ and $X_{v_3}=0$ we have $v\in X$. This happens with probability at least $p_1(1-p_1)+p_1^3(1-p_1)$.
\item If $v$ is in Case $4$, if $X_v=1$ and $X_{v_1}=0$, or if $X_v$, $X_{v_1}$ and $X_{v_2}$ all initially equal 1 and $X_{v_1}$ is selected to be modified, then we have $v\in X$. This happens with probability $p_1(1-p_1)+\frac13p_1^3$.
\end{itemize}

Suppose now that $d^+(v)\neq 1$. The probability that $v\in X$ is $p_2$.  If $v$ is popular in $X$, then over half of its out-neighbors $w$ have $X_w=1$ (this is necessary for $w\in X$). Since the $X_w$ are independent, and each of them takes the value 1 with probability at most $p_1$, the probability that $v$ is popular on $X$, conditioned on $v\in X$, is at most $\Pr\left(B(d^+(v),p_1)>\frac{d^+(v)}{2}\right)$. For $d^+(v)=3$, this probability is $3p_1^2-2p_1^3$. We claim that this is the worst case: 

\begin{proposition} For every $k\neq 1$, we have \[\Pr\left(B(k,p_1)>\frac k2\right)\leq\Pr(B(3,p_1)\geq 2).\]\end{proposition}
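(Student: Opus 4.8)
The statement asserts that among binomial tail probabilities $\Pr(B(k,p_1) > k/2)$, the value at $k=3$ is the largest (excluding the anomalous $k=1$). Since $p_1 = 0.4594 < 1/2$, intuitively the tail above the mean should \emph{decrease} as $k$ grows, but the discreteness of the threshold $k/2$ introduces complications: for odd $k$ the event is $B(k,p_1) \geq (k+1)/2$, while for even $k$ it is $B(k,p_1) \geq k/2 + 1$. The plan is to treat odd and even $k$ separately and to compare each tail against the target value $\Pr(B(3,p_1) \geq 2) = 3p_1^2 - 2p_1^3$ (which numerically is roughly $0.304$).

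First I would dispose of the small cases by direct computation: $k=2$ gives $\Pr(B(2,p_1) \geq 2) = p_1^2 \approx 0.211$; $k=3$ is the target itself; $k=4$ gives $\Pr(B(4,p_1)\geq 3) = 4p_1^3(1-p_1) + p_1^4$, which one checks is below $3p_1^2-2p_1^3$; and similarly $k=5$. For the general argument I would establish monotonicity of the relevant tails in $k$. The cleanest route is to compare $k$ with $k+2$ for fixed parity, using the standard coupling: write $B(k+2,p_1) = B(k,p_1) + Y$ where $Y \sim B(2,p_1)$ is independent. For odd $k$, $\Pr(B(k+2,p_1) \geq (k+3)/2)$ versus $\Pr(B(k,p_1)\geq(k+1)/2)$ can be analyzed by conditioning on $Y \in \{0,1,2\}$; one shows the increment in threshold (by $1$) is, on balance, not compensated by the extra mass, precisely because $p_1 < 1/2$ makes $\Pr(Y=0) = (1-p_1)^2 > \Pr(Y=2) = p_1^2$. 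A convenient way to make this rigorous is via the identity $\Pr(B(k,p)\geq m) = I_p(m,k-m+1)$ in terms of the regularized incomplete beta function, or more elementarily by using that for $p<1/2$ the median of $B(k,p)$ is at most $\lceil kp \rceil \leq k/2$, so the tail above $k/2$ is at most $1/2$ and in fact decays; but to beat the specific constant $\approx 0.304$ one needs a quantitative handle.

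The main obstacle I anticipate is the odd case $k=5$ and possibly $k=7$: here $\Pr(B(5,p_1)\geq 3)$ is the closest competitor to $\Pr(B(3,p_1)\geq 2)$, and since $p_1$ is only slightly below $1/2$ these two numbers are genuinely close, so a crude bound will not separate them — one must compute $3p_1^2 - 2p_1^3 - \Pr(B(5,p_1)\geq 3)$ and verify it is positive, which amounts to checking a specific degree-five polynomial inequality in $p_1$ at the given numerical value (this is where the precise choice $p_1 = 0.4594$ matters). Once $k=2,3,4,5$ (and perhaps $6,7$) are handled by explicit polynomial evaluation, the tail-monotonicity step closes out all larger $k$: for $k \geq 6$ I would show $\Pr(B(k,p_1) > k/2)$ is decreasing in $k$ along each parity class, and since both parity classes have already dropped below the target by $k=4$ and $k=5$ respectively, we are done. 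The bulk of the write-up will therefore be (i) a short lemma establishing the parity-wise monotonicity via the two-step binomial coupling, and (ii) a handful of routine but essential numerical verifications of polynomial inequalities in $p_1$.
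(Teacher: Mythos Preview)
Your approach is essentially the same as the paper's --- monotonicity of the tail along a parity class via a two-step coupling --- but you are making it harder than necessary. The paper's proof needs \emph{no} numerical evaluation at any specific $p_1$ and uses only that $p_1<\tfrac12$. Two simplifications you are missing: first, for even $k$ the event $\{B(k,p_1)>k/2\}$ is literally contained in $\{B(k+1,p_1)>(k+1)/2\}$ (adding one more trial cannot push you below the threshold), so every even case is dominated by the next odd case and there is no need to run a separate even-parity monotonicity argument or to check $k=2,4,6$ by hand. Second, the odd-to-odd comparison $\Pr(I_{2k+1})<\Pr(I_{2k-1})$ already holds from $k=2$ onward: writing out the symmetric differences gives $\Pr(I_{2k-1})-\Pr(I_{2k+1})=\binom{2k-1}{k}p_1^{k}(1-p_1)^{k}(1-2p_1)>0$, so $I_3$ is the maximum over all odd $k\ge 3$ with no base-case calculation required. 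In particular your worry that $k=5$ might be a ``close competitor'' requiring a delicate polynomial check at the specific value $p_1=0.4594$ is unfounded --- the inequality $\Pr(B(5,p_1)\ge 3)<\Pr(B(3,p_1)\ge 2)$ drops out of the same one-line identity.
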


\begin{proof} Consider an infinite sequence $X_1, X_2, \dots$ of indicating random variables, each taking value 1 independently with probability $p_1$. Let $I_i$ be the event ``among the first $i$ variables more than half take value 1''. Then $\Pr(I_k)=\Pr\left(B(k,p_1)>\frac k2\right)$. Clearly $\Pr(I_0)=0$. Moreover, if $k$ is even then $I_k$ implies $I_{k+1}$, so we can restrict ourselves to odd $k$.

We will prove our statement by induction, by showing that $\Pr(I_{2k+1})<\Pr(I_{2k-1})$ for  $k\geq 2$. Indeed, the event $I_{2k-1}\setminus I_{2k+1}$ is precisely the case in which exactly $k$ of the first $2k-1$ variables take value $1$, and $X_{2k}=X_{2k+1}=0$. Thus $\Pr(I_{2k-1}\setminus I_{2k+1})={2k-1 \choose k}p_1^k(1-p_1)^{k+1}$. Similarly, the event $I_{2k+1}\setminus I_{2k-1}$ is precisely the case in which exactly $k-1$ of the first $2k-1$ variables take value 1, and $X_{2k}=X_{2k+1}=1$. Thus $\Pr(I_{2k+1}\setminus I_{2k-1})={2k-1 \choose k-1} p_1^{k+1}(1-p_1)^k$. Now $P(I_{2k-1})-P(I_{2k+1})=\Pr(I_{2k-1}\setminus I_{2k+1})-\Pr(I_{2k+1}\setminus I_{2k-1})={2k-1 \choose k}p_1^k(1-p_1)^k(1-2p_1)>0$.\end{proof}

With this, we know that for every vertex $v$ the probability that $v$ is in $X$ and not popular in $X$ is at least \begin{align*}&\min\{p_1(1-p_2),p_1(1-p_1)+p_1^2p_2,p_1(1-p_1)+p_1^3(1-p_1),p_1(1-p_1)+\frac13p_1^3,p_2(1-3p_1^2+2p_1^3)\}\\ &=p_2(1-3p_1^2+2p_1^3)=0.252513=:p.  \end{align*}

Applying Lemma~\ref{alterations}, there is a fractional majority coloring of $D$ with total weight at most $\frac 1p<3.9602$.\end{proof}

\section{NP-Hardness of Majority $2$-Coloring} \label{sec:npcompleteness}
The authors of \cite{kreutzer} asked whether there is a polynomial time algorithm to recognize digraphs which have a majority $2$-coloring. In this section, we prove Theorem~\ref{thm:nphardness} and therefore answer this question in the negative by showing that the recognition is NP-complete. For this purpose, we reduce from the problem of testing 2-colorability of $3$-uniform hypergraphs. 

\begin{problem}[2-Coloring 3-Uniform Hypergraphs] \label{prob:hypcoloring}
Input: A $3$-uniform hypergraph $H=(V,E)$. 

Decide whether $V$ can be $2$-colored such that no edge $e \in E$ has all vertices of the same color. 
\end{problem}

It is well-known that the above decision problem is NP-complete, see for instance~\cite{Garey}.

\begin{figure}[h]
\begin{center}
\includegraphics[scale=1]{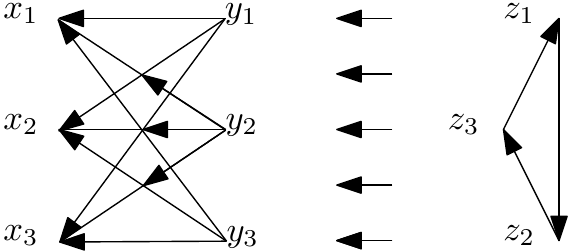}
\end{center}
\caption{The digraph $D_9$. The $18$ edges pointing from the triangle to the remaining six vertices are only indicated. } \label{gadget}
\end{figure}

Let $D_9$ be the digraph with $9$ vertices and $30$ edges obtained from the disjoint union of an upward-orientation of $K_{3,3}$ and a $\vec{C}_3$ by inserting all arcs starting in a vertex of the triangle and ending in one of the six vertices of the oriented $K_{3,3}$. Consider the vertex-labeling of $D_9$ as depicted in Figure~\ref{gadget}. We have the following observation.

\begin{observation}\label{obs:example}
Let $p:\{x_1,x_2,x_3\} \rightarrow \{1,2\}$ be a partial vertex-coloring of $D_9$. Then $p$ can be extended to a majority-coloring of $D_9$ using colors $1$ and $2$ if and only if $p(x_i) \neq p(x_j)$ for some $i, j \in \{1,2,3\}$.
\end{observation}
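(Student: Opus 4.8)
The statement is an ``if and only if'', so I would prove the two directions separately, and both boil down to understanding where monochromatic vertices can hurt. Let me fix the structure of $D_9$: write $\{x_1,x_2,x_3\}$ for the triangle $\vec{C}_3$, and let $\{a_1,a_2,a_3\}$ and $\{b_1,b_2,b_3\}$ be the two sides of the oriented $K_{3,3}$, where all arcs of the $K_{3,3}$ go from the $a$'s to the $b$'s, and every $x_i$ sends an arc to every $a_j$ and every $b_j$. Then the out-degrees are: $d^+(x_i)=1+6=7$ (one triangle arc, six arcs into the $K_{3,3}$), $d^+(a_j)=3$, $d^+(b_j)=0$. So each $b_j$ is a sink and the majority condition is automatic there; each $a_j$ needs at most one of its three $b$-out-neighbors to share its color; and each $x_i$ needs at most three of its seven out-neighbors (the next triangle vertex plus the six $K_{3,3}$ vertices) to share its color.

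For the forward direction, suppose $p(x_1)=p(x_2)=p(x_3)=1$ (WLOG). I claim no extension is a majority coloring: consider $x_1$. Its out-neighbor in the triangle, say $x_2$, already has color $1$, so at most two of the six vertices $a_1,a_2,a_3,b_1,b_2,b_3$ may receive color $1$; symmetrically for $x_2$ and $x_3$. But the three conditions together force \emph{every} $a_j$ and $b_j$ to avoid color $1$ essentially — more precisely, at most two of the six can be colored $1$ as seen from $x_1$, but this must hold simultaneously from all three $x_i$, giving the same bound, so at least four of $\{a_1,a_2,a_3,b_1,b_2,b_3\}$ get color $2$. Then some $a_j$ has color $2$ and at least two of $b_1,b_2,b_3$ have color $2$; but $a_j$ has all three $b$'s as out-neighbors, so it sees at least two out-neighbors of its own color $2$ out of three, violating the majority condition. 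I would tidy the counting (a pigeonhole on the six $K_{3,3}$-vertices), but this is the essential argument.

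For the reverse direction, assume WLOG $p(x_1)=1$ and $p(x_2)=2$ (the value of $p(x_3)$ is irrelevant). I would exhibit an explicit extension. Color $a_1=1$, $a_2=2$, $a_3$ arbitrary; color $b_1=2$, $b_2=1$, $b_3$ arbitrary — chosen so that each $a_j$ sees at most one $b$ of its own color among $b_1,b_2,b_3$ (for instance pair up colors so that at least two $b$'s differ from each $a_j$; with three $b$'s this is easy, e.g. take $b_1=2,b_2=1,b_3=1$ and check $a_1=1$ sees only $b_2,b_3$, that's two of its own color — so I actually need to be a bit careful and choose the $a$'s and $b$'s to have a balanced $1/2$-split, say two $a$'s of one color and at most one $b$ matching each). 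The clean way: give the six $K_{3,3}$-vertices colors so that among $\{b_1,b_2,b_3\}$ at most one matches any given $a_j$; since there are only three $b$'s, coloring them $1,2$ with the majority color opposite to the majority $a$-color works. Then check $x_1$: its triangle out-neighbor $x_2$ has color $2\neq 1$, and among the six $K_{3,3}$-vertices at most three share color $1$, so at most $0+3=3=\lfloor 7/2\rfloor$ out-neighbors match — fine; similarly $x_2$ sees $x_3$ (could be either color) plus at most three $K_{3,3}$-vertices of color $2$, at most $1+3=4>3$ in the worst case, so here I must be more careful and instead balance the $K_{3,3}$-colors so that exactly three are $1$ and three are $2$, making every $x_i$ see at most $1+3=4$ — still too many. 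The fix is that $x_2$'s out-neighbor $x_3$ can be \emph{set} to color $1$, so $x_2$ sees $x_3$ (color $1\neq 2$) and at most three $2$-colored $K_{3,3}$-vertices, giving $\le 3$; and then $x_3$ has color $1$, its out-neighbor $x_1$ has color $1$ — bad — so instead set $x_3=2$ making $x_3$ see $x_1$ ($1\neq2$) plus three, fine, but now $x_2$ sees $x_3=2$. So one of the three triangle vertices is unavoidably ``bad'' in the triangle unless we use the freedom in coloring the $K_{3,3}$: the vertex $x_i$ whose triangle-successor shares its color must see \emph{at most two} matching vertices in the $K_{3,3}$. Since we get to choose the $K_{3,3}$-coloring and only one $x_i$ is in this situation, we split the six $K_{3,3}$-vertices as two of that $x_i$'s color and four of the other — then that $x_i$ sees $1+2=3$ and the other two $x_j$'s see at most $1+4=5$, still bad.

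I expect \textbf{the reverse direction — finding a consistent extension — to be the main obstacle}, precisely because of this tension: the seven out-neighbors of each $x_i$ and the fact that $a_j$'s see their whole $b$-neighborhood interact. The resolution, which I would carry out carefully, is to use all the available freedom at once: choose $p(x_3)$ (if not already fixed) to differ from at least one of $p(x_1),p(x_2)$ so that the triangle arcs are ``mostly non-monochromatic,'' then choose the colors of $a_1,a_2,a_3,b_1,b_2,b_3$ to simultaneously (i) balance so each $a_j$ sees at most one matching $b$, and (ii) ensure each $x_i$ sees at most three matching vertices among the six. A direct finite case-check — there are essentially only two cases for $p$ up to symmetry, namely ``all three equal'' (handled in the forward direction) and ``not all equal'', and in the latter a single well-chosen coloring of the remaining six vertices works — will finish it. Since $D_9$ is a fixed nine-vertex digraph, in the write-up I would simply display the explicit majority coloring for each of the (few) equivalence classes of valid $p$ and verify the out-degree conditions by inspection, which is the cleanest route and sidesteps the delicate counting above.
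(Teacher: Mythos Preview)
Your proposal rests on a misreading of the digraph $D_9$. In the paper, the pre-colored vertices $x_1,x_2,x_3$ are \emph{not} the triangle; they are the three sinks at the top of the oriented $K_{3,3}$. The structure is: $y_1,y_2,y_3$ each send arcs to all of $x_1,x_2,x_3$ (so $N^+(y_j)=\{x_1,x_2,x_3\}$), the triangle is on separate vertices $z_1,z_2,z_3$, and every $z_i$ sends an arc to each of the six $K_{3,3}$-vertices. Thus $d^+(x_i)=0$, $d^+(y_j)=3$, $d^+(z_i)=7$.

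This matters, because under your reading the observation is simply false. With your labeling (the $x_i$ forming the triangle, $a$'s pointing to $b$'s), the monochromatic pre-coloring $p(x_1)=p(x_2)=p(x_3)=1$ \emph{does} extend: set $a_1=a_2=a_3=2$ and $b_1=b_2=1$, $b_3=2$. Each $x_i$ then sees exactly three color-$1$ out-neighbors among seven, and each $a_j$ sees one color-$2$ out-neighbor among three. Your forward-direction counting breaks precisely here: ``at least four of the six are colored $2$'' does not imply ``some $a_j$ is colored $2$ and at least two $b$'s are colored $2$'', and no tidying of the pigeonhole will close that gap.

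With the correct structure the paper's argument is immediate. Forward direction: if $c(x_1)=c(x_2)=c(x_3)=1$, then every $y_j$ (whose entire out-neighborhood is $\{x_1,x_2,x_3\}$) is forced to color $2$; now two of $z_1,z_2,z_3$ share a color, and whichever one points to the other sees $1+3=4$ matching out-neighbors among seven. Reverse direction: for $p(x_1)=p(x_2)=1$, $p(x_3)=2$, the paper just writes down $c(y_1)=c(y_2)=c(y_3)=2$, $c(z_1)=c(z_2)=c(z_3)=1$ and checks it --- no delicate balancing is needed, which is why your struggle in the second half was a signal that something structural was off.
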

\begin{proof}
By symmetry and relabeling, it suffices to show that the partial coloring $p(x_1)=p(x_2)=p(x_3)=1$ can not be extended to a majority $2$-coloring of $D_9$, while there is an extension of the coloring $p(x_1)=p(x_2)=1, p(x_3)=2$.

For the first part, suppose there was a majority $2$-coloring $c$ of $D_9$ such that $c(x_1)=c(x_2)=c(x_3)=1$. Then, since $N^+(y_j)=\{x_1,x_2,x_3\}$ for $j=1,2,3$, we must have $c(y_1)=c(y_2)=c(y_3)=2$. Since we are only coloring with two colors, two vertices of the triangle $z_1z_2z_3$ must be colored the same, say $c(z_1)=c(z_2)$. Then when looking at the vertex $z_1$, we see that it has exactly $7$ out-neighbors, of which at least $4$ also have color $c(z_1)$. This shows that $c$ is no valid majority-coloring, a contradiction.

For the second part, it suffices to verify that the coloring $c(x_1)=c(x_2)=1, c(x_3)=2$, $c(y_1)=c(y_2)=c(y_3)=2$, $c(z_1)=c(z_2)=c(z_3)=1$ defines a majority $2$-coloring of $D_9$.
\end{proof}

\begin{proof}[Proof of Theorem~\ref{thm:nphardness}.]
The NP-membership follows because a valid majority $2$-coloring can be verified in polynomial time in the size of $D$.

For the NP-hardness we describe a polynomial reduction from Problem~\ref{prob:hypcoloring}. Suppose we are given an arbitrary $3$-uniform hypergraph $H=(V,E)$ as an instance for Problem~\ref{prob:hypcoloring}. For each edge $e \in E$ produce a distinct copy of $D_9$. Now identify the vertices of every edge $e$ with the three vertices $x_1,x_2,x_3$ of its respective $D_9$-copy. This gives rise to a digraph $D$ consisting of $|V|+6|E|$ vertices in which several copies of $D_9$ share certain sink vertices (namely, if their corresponding hyperedges in $H$ intersect). 

Because we only identify sink vertices of the $D_9$-copies, a $2$-coloring of the vertices of $D$ is a majority $2$-coloring if and only if the induced $2$-coloring on each of the $D_9$-copies is a majority $2$-coloring. By Observation~\ref{obs:example} we conclude that $D$ admits a majority $2$-coloring if and only if it is possible to color the vertices of $H$ in such a way that in every hyperedge there are two vertices with distinct colors. 

We have therefore reduced Problem~\ref{prob:hypcoloring} with instance $H$ to the problem of deciding whether the polynomial-sized digraph $D$ is majority $2$-colorable, which shows the correctness of the reduction. This concludes the proof of the claimed NP-hardness of the majority-$2$-coloring problem.
\end{proof}

\section{Conclusive Remarks and Discussion}\label{concl}
Gir\~{a}o et al.~\cite{girao} and independently Knox and \v{S}\'{a}mal \cite{linear} investigated a natural generalization of majority colorings: For any $\alpha \in [0,1]$, define an \emph{$\alpha$-majority coloring} of a digraph $D$ to be a vertex-coloring in which for every vertex $v$, at most $\alpha \cdot d^+(v)$ vertices in $N^+(v)$ have the same color as $v$. If such a coloring can be found for any assignment of lists of size at least $\ell$ to the vertices, call the digraph \emph{$\alpha$-majority $\ell$-choosable}.

Generalizing the result by Anholcer et al.~it was proved both in \cite{girao} and \cite{linear} that for every integer $k \ge 1$, every digraph is $\frac{1}{k}$-majority $2k$-choosable. Gir\~{a}o et al.~proposed the following generalization of Conjecture~\ref{mainconj}:
\begin{conjecture}\label{2kconjecture}
For every integer $k \ge 1$, every digraph $D$ has a $\frac{1}{k}$-majority $(2k-1)$-coloring. In fact, every digraph is $\frac{1}{k}$-majority $(2k-1)$-choosable.
\end{conjecture}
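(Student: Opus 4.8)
Since the case $k=2$ of Conjecture~\ref{2kconjecture} is exactly Conjecture~\ref{mainconj}, which is open, the following is a program for lifting the methods of this paper to general $k$ rather than a proof, together with an indication of where the genuine difficulty lies. \emph{The structural target.} The plan is to generalize the chain Theorem~\ref{theorem:2choosability} $\Rightarrow$ Theorem~\ref{theorem:3oddpartition} $\Rightarrow$ Theorems~\ref{dichrom3},~\ref{ldichrom3},~\ref{maxdegree}. The key conceptual step is to find the correct replacement for ``no odd directed cycle'': I would use \emph{no directed cycle whose length is not divisible by $k$} (for $k=2$ this is precisely the hypothesis of Proposition~\ref{proposition:nooddcycles}). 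The analogue of Theorem~\ref{theorem:3oddpartition} to aim for is then: \emph{if $V(D)$ admits a partition $\{X_1,\dots,X_{2k-1}\}$ such that every $D[X_i]$ has all its directed cycles of length divisible by $k$, then $D$ is $\frac1k$-majority $(2k-1)$-colorable}. The number $2k-1$ enters for a clean reason: it is the largest $n$ for which any two $k$-element subsets of $[n]$ intersect. One would give $X_i$ the list $\{i,i+1,\dots,i+k-1\}\pmod{2k-1}$, so that any two of the $2k-1$ lists overlap, and then invoke a list version as below.

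\emph{The base building block.} First one needs the analogue of Proposition~\ref{proposition:nooddcycles}: a digraph all of whose directed cycles have length divisible by $k$ admits a $\frac1k$-majority coloring with $\{1,\dots,k\}$ extending any prescribed coloring of its sinks. In the strongly connected case such a digraph carries a grading $\phi\colon V(D)\to\mathbb{Z}/k\mathbb{Z}$ with $\phi(v)=\phi(u)+1$ along every arc $(u,v)$: every closed walk in a digraph decomposes into directed cycles, so all closed walks have length $\equiv 0\pmod k$, whence propagating $\phi$ along directed paths from one fixed vertex is well defined. Coloring $v$ by $\phi(v)$ then makes \emph{every} arc inside the component bichromatic, which is stronger than required. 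The general case runs the same induction on the number of strong components as Proposition~\ref{proposition:nooddcycles}, peeling off a source strong component containing no sink of $D$ and then repeating the ``color greedily, then repair the overloaded vertices'' procedure verbatim, with the $\mathbb{Z}/k\mathbb{Z}$-grading of the peeled component playing the role of the bipartition.

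\emph{The splicing step, and the obstacles.} The remaining step is to generalize Theorem~\ref{theorem:2choosability}: from $k$-element lists $L(v)\subseteq[2k-1]$ whose same-list classes induce subdigraphs with all directed cycles of length divisible by $k$, produce a $\frac1k$-majority coloring chosen from the lists. Combined with the observation about $2k-1$ above, this yields the structural target, hence (exactly as for Theorems~\ref{dichrom3},~\ref{ldichrom3}) Conjecture~\ref{2kconjecture} for every digraph with $\vec{\chi}(D)\le 2k-1$, and, via a degeneracy/Brooks-type argument in the spirit of Lemma~\ref{outdegree3}, for digraphs whose out-degrees are bounded in terms of $k$. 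I expect two obstacles. The first is technical: the $k=2$ proof decouples distinct list-classes because, for a vertex $y$ outside the class of list $S$, the intersection $L(y)\cap S$ is a single color and therefore forces the auxiliary precoloring to agree with the final color; for $k\ge 3$ this intersection can have up to $k-1$ elements, so the classes must be handled in a coordinated way, or $D[X_S]$ must be augmented so that the relevant precolor is again forced. The second obstacle is the fundamental one, and is the same as for $k=2$: this program says nothing about digraphs of small out-degree in general (for instance $r$-regular digraphs with small $r$), which resist both the probabilistic approach of~\cite{kreutzer,girao,linear} and the structural approach above; in my view that is where the real difficulty of Conjecture~\ref{2kconjecture} is concentrated.
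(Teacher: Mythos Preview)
The statement is Conjecture~\ref{2kconjecture}, and the paper does \emph{not} prove it; as you yourself note, the case $k=2$ is Conjecture~\ref{mainconj}, which is open. There is therefore no proof in the paper to compare against. The only progress the paper records toward Conjecture~\ref{2kconjecture} is Proposition~\ref{proposition:generalizedcoloring} (the colorability half for digraphs with $\vec{\chi}(D)\le 2$), and its argument does not follow your program: instead of a $k$-colour analogue of the odd-cycle machinery, it takes an acyclic bipartition $X_1,X_2$, greedily $\frac1k$-majority $k$-colours each augmented acyclic digraph $D_i'$ along a topological order with every sink forced to colour~$1$, and then uses palettes $\{1,\dots,k\}$ and $\{1,k+1,\dots,2k-1\}$ on the two sides. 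These two palettes meet only in the single colour~$1$, so the local colourings glue with no coordination---precisely the luxury that, as you correctly point out, is lost once one has $2k-1$ parts with $k$-element lists.

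There is also a concrete gap in your program, already before the splicing step: the ``base building block'' does not carry over verbatim. In Proposition~\ref{proposition:nooddcycles} the termination condition $\max_i d(c_U,i,x)<\tfrac12 d^+(x)$ guarantees that \emph{either} bipartition colour is safe for every $x\in U$. For $k\ge 3$ the monotone invariant one must maintain is ``at least $\tfrac{k-1}{k}d^+(x)$ already-coloured out-neighbours have a colour different from that of $x$'', and the corresponding termination condition (no colour can yet be safely assigned to $x$) does \emph{not} force $d(c_U,i,x)\le\tfrac1k d^+(x)$ for every $i$. Concretely, with $k=3$, $d^+(x)=9$, and colour counts $(4,1,0)$ among the out-neighbours of $x$ in $V(D)\setminus U$, the vertex $x$ is never removable during the procedure, yet if the $\mathbb{Z}/k\mathbb{Z}$-grading on $U$ forces colour~$1$ on $x$, the $\tfrac13$-majority condition is violated. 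So even the analogue of Proposition~\ref{proposition:nooddcycles} needs a genuinely new idea for $k\ge 3$, not merely a relabeling.
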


It is natural to try and generalize the results presented in this paper for majority colorings with $\alpha=\frac{1}{2}$ to arbitrary values $\alpha \in [0,1]$. Among our results, we can only generalize a special case of Theorem~\ref{dichrom3}, namely for digraphs of dichromatic number $2$, we verify the first part of Conjecture~\ref{2kconjecture} for all $k \ge 1$.
\begin{proposition}\label{proposition:generalizedcoloring}
Let $D$ be a digraph with $\vec{\chi}(D) \leq 2$. Then for every $k \in \mathbb{N}$, $k \ge 2$, $D$ admits a $\frac{1}{k}$-majority coloring using $2k-1$ colors.
\end{proposition}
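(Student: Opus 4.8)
The plan is to mimic the structure of Theorem~\ref{theorem:3oddpartition}, but replacing the two-element color lists by lists of size $k$ and the majority argument with an $\alpha$-majority argument along a suitable vertex ordering. Since $\vec{\chi}(D) \le 2$, fix a partition $\{X_1, X_2\}$ of $V(D)$ with $D[X_1]$ and $D[X_2]$ both acyclic. The idea is to reserve a block of colors for each part: assign to every vertex in $X_1$ the list $\{1, \dots, k\}$ and to every vertex in $X_2$ the list $\{k, k+1, \dots, 2k-1\}$, so the lists overlap only in the single color $k$ and each has size $k$. The goal is then to produce a choice function from these lists that is a $\frac1k$-majority coloring; it will use at most $2k-1$ colors, as required.

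To construct the coloring, I would process $X_1$ and $X_2$ separately, using the acyclicity of each induced subdigraph. Within $D[X_i]$, there is a topological-type ordering in which every vertex is preceded by all of its out-neighbors inside $X_i$. Color the vertices of $X_1$ greedily along this ordering: when coloring $v \in X_1$, its already-colored out-neighbors in $X_1$ have received colors from $\{1,\dots,k\}$, and by pigeonhole one of the $k$ colors appears on at most a $\frac1k$ fraction of those neighbors; give $v$ such a color. The out-neighbors of $v$ lying in $X_2$ are as yet uncolored, but every color we will later assign to an $X_2$-vertex lies in $\{k,\dots,2k-1\}$, which meets $\{1,\dots,k\}$ only in $k$. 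So the only danger is that $v$ was forced to take color $k$. To handle this, when choosing the least-frequent color for $v$ among its $X_1$-out-neighbors, break ties (and, if possible, even avoid $k$ altogether) in favor of a color other than $k$; since there are $k \ge 2$ colors available and at most one of them can be "bad", one checks that a color $c(v) \ne k$ achieving frequency at most $\frac1k d_{X_1}^+(v) \le \frac1k d^+(v)$ among all out-neighbors can always be selected — unless $k$ is strictly the unique minimizer, a case that needs a small separate argument. Then do the symmetric thing on $X_2$ with the block $\{k+1,\dots,2k-1, k\}$, again steering away from the shared color $k$.

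The main obstacle, and the place where the argument is genuinely more delicate than in the $\alpha = \frac12$ case, is precisely the overlap color $k$: a vertex $v \in X_1$ whose $X_1$-out-neighborhood is very skewed might be compelled to take color $k$, and then its $X_2$-out-neighbors colored $k$ can push it over the $\frac1k$ threshold once $d^+(v)$ counts both parts. The clean fix is to ensure the two blocks are genuinely disjoint by using $2k$ colors — but the statement demands only $2k-1$. To recover the saving of one color I would argue that the two greedy passes can be coordinated: process the parts alternately, or observe that color $k$ need only be used on one side (say $X_2$) and never on $X_1$, using instead that $|\{1,\dots,k-1\}| = k-1$ together with one extra "slack" color reserved from the other block for the rare skewed vertices. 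Concretely, when $v\in X_1$ has all $X_1$-out-neighbors equal to a single color $j \in \{1,\dots,k-1\}$, we may color $v$ with color $k$, and the bound $1 \le \frac1k d^+(v)$ holds whenever $d^+(v) \ge k$; the remaining vertices with $d^+(v) < k$ have so few out-neighbors that the pigeonhole already gives a non-$k$ color of frequency $0$. Assembling these cases — low-out-degree vertices (pigeonhole gives an unused, hence safe, color), and high-out-degree vertices (the $\frac1k$ threshold absorbs the single shared color) — yields a $\frac1k$-majority coloring with exactly $2k-1$ colors. I expect the write-up to be a careful but elementary case analysis on $d^+(v)$ versus $k$, with the acyclic ordering of each $D[X_i]$ doing the real work.
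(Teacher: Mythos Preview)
Your overall plan — split $V(D)=X_1\cup X_2$ with each $D[X_i]$ acyclic, give $X_1$ the palette $\{1,\dots,k\}$ and $X_2$ the palette $\{k,\dots,2k-1\}$, and color greedily along a topological ordering of each acyclic part — is exactly the right shape. But the execution has a real gap: running the greedy \emph{inside} $D[X_i]$ ignores the out-neighbors in $X_{3-i}$, and your case analysis for handling the shared color $k$ afterwards does not close. Concretely, take $k=2$ and a vertex $v\in X_1$ with a single $X_1$-out-neighbor $u$ (colored $1$) and two out-neighbors in $X_2$. Your greedy on $D[X_1]$ sees frequencies $(1,0)$ for colors $(1,2)$, so $2$ is the strict unique minimizer and your rule assigns $c(v)=2$; your ``separate argument'' for this case also assigns $2$. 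But both $X_2$-out-neighbors may end up colored $2$, giving $v$ two out of three out-neighbors of its own color and violating the $\tfrac12$-majority condition. The correct move here is to give $v$ color $1$, which you cannot discover without looking at $N^+(v)\cap X_2$. Your fallback ideas (tie-breaking away from $k$, restricting one side to $k-1$ colors, or relying on ``$1\le \tfrac1k d^+(v)$'') do not cover this situation.

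The paper's proof uses essentially the same greedy ingredient but with one additional idea that exactly patches this: instead of running the greedy on $D[X_i]$, run it on the acyclic digraph $D_i'$ obtained from $D[X_i]$ by attaching, as sinks, all out-neighbors in $X_{3-i}$, and \emph{pre-color all these sinks with the shared color} (the paper uses $1$ as the shared color). Now when the greedy picks a color for $x\in X_i$, it is minimizing over the full out-neighborhood $N_D^+(x)$, with every cross-neighbor counted as having the shared color. If the greedy then gives $x$ the shared color, the bound $\tfrac1k d^+(x)$ already absorbs all cross-neighbors (and in the final coloring some of them may get other colors, only helping); if the greedy gives $x$ any other color, no cross-neighbor can match it anyway. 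This replaces your ad hoc case split with a single clean inclusion $\{y\in N^+(x):c(y)=c(x)\}\subseteq\{y\in N_{D_i'}^+(x):c_i(y)=c_i(x)\}$ and finishes the proof in a few lines.
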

\begin{proof}
Consider first an acyclic digraph $F$ with a pre-coloring of its sinks using colors from $\{1,\ldots,k\}$. We claim that such a coloring can always be extended to a $\frac{1}{k}$-majority coloring of $F$ also using colors from $\{1,\ldots,k\}$. To find such a coloring, we take a topological ordering $x_1,\ldots,x_n$ of the vertices (i.e. $(x_i,x_j) \notin E(D)$ for all $i \le j$) such that $\{x_1,\ldots,x_t\}$ are the pre-colored sinks. Now we color the vertices one by one, starting with $x_{t+1}$, then $x_{t+2}$ etc. When coloring the vertex $x_i$ with $i>t$, we assign to it a color from $\{1,\ldots,k\}$ appearing least frequently among its (already colored) out-neighbors. This procedure eventually yields a $k$-coloring of $F$ where any vertex has at most a $\frac{1}{k}$-fraction of its out-neighbors with the same color. 

Now let $\{X_1,X_2\}$ be a partition of $V(D)$ such that $D[X_1]$, $D[X_2]$ are acyclic. For $i=1,2$ let $D_i'$ be the digraph obtained from $D[X_i]$ by adding all arcs in $D$ leaving $X_i$ together with their endpoints. Clearly, also $D_1'$ and $D_2'$ are acyclic. By the above observation, $D_i'$ for $i=1,2$ has a majority $\frac{1}{k}$-coloring $c_i$ with $k$ colors in which all sinks receive color $1$. After renaming we may suppose that $c_1$ uses colors from $\{1,2,\ldots,k\}$, while $c_2$ uses colors from $\{1,k+1,k+2,\ldots,2k-1\}$. We now define a $(2k-1)$-coloring of all vertices in $D$ by putting $c(x):=c_i(x)$ for $x \in X_i$. For any vertex $x \in X_i$, we have that $N^+(x)=N_{D_i'}^+(x)$, and, since all vertices in $V(D_i') \setminus X_i$ received color $1$ under $c_i$, it follows that $\{y \in N^+(x)|c(y)=c(x)\} \subseteq \{y \in N_{D_i'}^+(x)|c_i(y)=c_i(x)\}$. Therefore, and since $c_i$ is a majority $\frac{1}{k}$-coloring of $D_i'$, at most a $\frac{1}{k}$-fraction of vertices in $N^+(x)$ have the same color as $x$. This shows that $c$ is a coloring as requested and concludes the proof. 
\end{proof}
It is worth noting that the above bound is tight. Consider for example the circulant digraph $\vec{C}(2k-1,k)$ which has as vertex set $\mathbb{Z}_{2k-1}$, and where we have an edge $(i,j)$ if and only if $j-i \in \{1,2,3\ldots,k-1\}$. It is easy to see that in any majority $\frac{1}{k}$-coloring of $D$, the $2k-1$ vertices must receive pairwise distinct colors, however, the partition $X_1=\{0,1,\ldots,k-1\}$, $X_2=\{k,k+1,\ldots,2k-2\}$ of the vertex set shows that $\vec{\chi}(\vec{C}(2k-1,k))=2$.

The methods used in this paper are unlikely to resolve Conjecture~\ref{mainconj} for the open cases of $5$- and $6$-regular digraphs. One possible approach could be via an extension to hypergraphs: Given a $5$-regular digraph $D$, consider the hypergraph $\mathcal{H}(D)$ with vertex set $V(D)$ and whose edges are $\{v\} \cup N^+(v)$, $v \in V(D)$. This hypergraph is $6$-regular and $6$-uniform. If we could now find a vertex-$3$-coloring of $\mathcal{H}(D)$ such that no hyperedge contains four vertices of the same color, this coloring would certainly be a majority coloring of $D$. We would therefore be interested in deciding the following question.
\begin{problem}
Let $H$ be a $6$-regular $6$-uniform hypergraph. Is there a $3$-coloring of $V(H)$ such that no hyperedge contains four vertices of the same color?
\end{problem}
The setting of $k$-regular $k$-uniform hypergraphs could be fruitful, as it is known that these hypergraphs have property B for all $k \ge 4$ (as noted in \cite{viswanathan}). We want to conclude with a small selection of open questions.
\begin{itemize}
\item Is every $5$-regular digraph $\frac{1}{3}$-majority $5$-colorable? We can show that it is possible to color with $5$ colors such that in each connected component, at most one vertex violates the majority condition.
\item Does every digraph with $\chi(D) \leq 6$ have a $\frac{1}{3}$-majority $5$-coloring?
\item Does every digraph $D$ with $\vec{\chi}(D) \leq 3$ have a $\frac{1}{k}$-majority $(2k-1)$-coloring for every $k \ge 1$?
\end{itemize}
\bibliography{references}

\newcommand{\etalchar}[1]{$^{#1}$}
\begin{thebibliography}{KOS{\etalchar{+}}17}

\bibitem[ABG17]{lists}
M.~Anholcer, B.~Bosek, and J.~Grytczuk.
\newblock Majority choosability of digraphs.
\newblock {\em The Electronic Journal of Combinatorics}, 24(P3.57), 2017.

\bibitem[BHL18]{listdichrom}
J.~Bensmail, A.~Harutyunyan, and N.~K. Le.
\newblock List coloring digraphs.
\newblock {\em Journal of Graph Theory}, 87(4):492--508, 2018.

\bibitem[GaKP17]{girao}
A.~Gir\~{a}o, T.~Kittipassorn, and K.~Popielarz.
\newblock Generalized majority colourings of digraphs.
\newblock {\em Combinatorics, Probability and Computing}, 26(6):850–855,
  2017.

\bibitem[GJ90]{Garey}
M.~R. Garey and D.~S. Johnson.
\newblock {\em Computers and Intractability; A Guide to the Theory of
  NP-Completeness}.
\newblock W. H. Freeman \& Co., New York, NY, USA, 1990.

\bibitem[KOS{\etalchar{+}}17]{kreutzer}
S.~Kreutzer, S.-i. Oum, P.~Seymour, D.~van~der Zypen, and D.~Wood.
\newblock Majority colorings of digraphs.
\newblock {\em The Electronic Journal of Combinatorics}, 24(P2.25), 2017.

\bibitem[Kv18]{linear}
F.~Knox and R.~\v{S}\'{a}mal.
\newblock Linear bound for majority colourings of digraphs.
\newblock {\em The Electronic Journal of Combinatorics}, 25(P3.29), 2018.

\bibitem[NL82]{neulara}
V.~Neumann-Lara.
\newblock The dichromatic number of a digraph.
\newblock {\em Journal of Combinatorial Theory, Series B}, 33(3):265 -- 270,
  1982.

\bibitem[Vis03]{viswanathan}
S.~Vishwanathan.
\newblock On 2-coloring certain k-uniform hypergraphs.
\newblock {\em Journal of Combinatorial Theory, Series A}, 101(1):168 -- 172,
  2003.

\bibitem[Viz76]{listbrooks}
V.~G. Vizing.
\newblock Vertex colorings with given colors.
\newblock {\em Diskret. Analiz.}, 29:3--10, 1976.
\newblock (in Russian).

\end{thebibliography}
\bibliographystyle{alpha}

\end{document}